\newtheorem{thm}{Theorem}[section]
\newtheorem{prop}[thm]{Proposition}
\newtheorem{lemma}[thm]{Lemma}
\newtheorem{cor}[thm]{Corollary}
\newtheorem{ques}{Question}
\theoremstyle{definition}
\newtheorem{defn}[thm]{Definition}
\theoremstyle{remark}
\newcommand{\N}{\mathbb{N}}
\newcommand{\R}{\mathbb{R}}
\newcommand{\B}{\mathbb{B}}
\newcommand{\A}{\mathbb{A}}
\newcommand{\LA}{\mathcal{A}}
\newcommand{\LB}{\mathcal{B}}
\newcommand{\LF}{\mathcal{F}}
\newcommand{\LU}{\mathcal{U}}
\newcommand{\LV}{\mathcal{V}}
\newcommand{\FIN}{\mathrm{FIN}}
\newcommand{\supp}{\mathrm{supp}}
\newcommand{\concat}{^\smallfrown}
\newcommand{\ZFC}{\mathsf{ZFC}}
\newcommand{\CH}{\mathsf{CH}}
\newcommand{\MA}{\mathsf{MA}}
\newcommand{\V}{\mathbf{V}}
\renewcommand{\L}{\mathbf{L}}
\newcommand{\restr}{\!\upharpoonright\!}
\newcommand{\osc}{\mathrm{osc}}
\newcommand{\D}{\mathbb{D}}
\title[Filters on a countable vector space]{Filters on a countable vector space}
\date{October 3, 2021}
\author{Iian B. Smythe}
\address{Department of Mathematics, University of Michigan, East Hall, 530 Church Street, Ann Arbor, MI 48109}
\urladdr{www.iiansmythe.com}
\email{smythe@umich.edu}
\subjclass[2010]{Primary 03E05, Secondary 15A03.}
\thanks{The author would like to thank Andreas Blass for many insightful conversations which contributed to this work.}
\begin{document}

\begin{abstract}
	We study various combinatorial properties, and the implications between them, for filters generated by infinite-dimensional subspaces of a countable vector space. These properties are analogous to selectivity for ultrafilters on the natural numbers and stability for ordered-union ultrafilters on $\FIN$.
\end{abstract}

\maketitle

\section{Introduction}

Throughout, we fix a countably infinite-dimensional vector space $E$ over a countable (possibly finite) field $F$, with distinguished basis $(e_n)$; one may take $E=\bigoplus_n F$ and $e_n$ the $n$th unit coordinate vector. We will use the term ``subspace'' in reference to $E$ exclusively to mean linear subspace. Our primary objects of study here are filters of subsets of $E\setminus\{0\}$ (we abuse terminology and call these filters on $E$), generated by infinite-dimensional subspaces. All such filters will be assumed to be proper and contain all subspaces of finite codimension.

We follow the terminology and notation of \cite{MR3864398}. A sequence $(x_n)$ of nonzero vectors in $E$ is called a \emph{block sequence} (and its span, a \emph{block subspace}) if for all $n$,
\[
	\max(\supp(x_{n}))<\min(\supp(x_{n+1})),
\]
where the \emph{support} of a nonzero vector $v$, $\supp(v)$, is the finite set of those $i$'s such that $e_i$ has a nonzero coefficient in the basis expansion of $v$. By taking linear combinations of basis vectors and thinning out, we can see that every infinite-dimensional subspace contains an infinite block sequence. Note that $\supp(v)$ is an element of $\FIN$, the set of nonempty finite subsets of $\omega$.

The set of infinite block sequences in $E$ is denoted by ${E^{[\infty]}}$ and inherits a Polish topology from $E^\omega$, where is $E$ discrete. We denote the set of finite block sequences by ${E^{[<\infty]}}$. Block sequences are ordered by their spans: we write $X\preceq Y$ if $\langle X\rangle\subseteq\langle Y\rangle$, where $\langle\cdot\rangle$ denotes the span (with $0$ removed), or equivalently, each entry of $X$ is a linear combination of entries from $Y$. We write $X/n$ (or $X/\vec{x}$, for $\vec{x}\in{E^{[<\infty]}}$) for the tail of $X$ consisting of those vectors with supports entirely above $n$ (or the supports of $\vec{x}$, respectively), and $X\preceq^* Y$ if $X/n\preceq Y$ for some $n$.

\begin{defn}
	A filter $\LF$ on $E$ is a \emph{block filter} if it has a base of sets of the form $\langle X\rangle$ for $X\in{E^{[\infty]}}$.	
\end{defn}

From now on, whenever we use the notation $\langle X\rangle$, it will be understood that $X\in{E^{[\infty]}}$.

In \cite{MR3864398}, we considered \emph{families}\footnote{Those readers familiar with \cite{MR3864398} should be cautioned that many of the definitions for families therein simplify in the case of filters, and that some of the results in the present article may no longer hold when ``filter'' is replaced by ``family''. This relationship is similar to that between ultrafilters and the more general class of coideals on $\omega$.} in ${E^{[\infty]}}$, i.e., subsets of ${E^{[\infty]}}$ which are upwards closed with respect to $\preceq^*$, with filters in $({E^{[\infty]}},\preceq)$ being those families in ${E^{[\infty]}}$ which are $\preceq$-downwards directed, as an important special case. As remarked there, one can go back and forth between filters in $({E^{[\infty]}},\preceq)$ and the block filters on $E$ they generate by taking spans and their inverse images, respectively.

\begin{defn}
	Given a block filter $\LF$ on $E$:
	\begin{enumerate}
		\item a set $D\subseteq E$ is \emph{$\LF$-dense} if for every $\langle X\rangle\in\LF$, there is an infinite-dimensional subspace $V\subseteq \langle X\rangle$ such that $V\subseteq D$.
		\item $\LF$ is \emph{full} if whenever $D\subseteq E$ is $\LF$-dense, we have that $D\in\LF$.
	\end{enumerate}
\end{defn}

Fullness is the analogue in this setting to being an ultrafilter: A filter $\LU$ on $\omega$ is an ultrafilter if and only if whenever $d\subseteq\omega$ has infinite intersection with each element of $\LU$, $d\in\LU$.

Already, the notion of a full block filter has substantial content: While they can be constructed using the Continuum Hypothesis $(\CH)$, Martin's Axiom $(\MA)$, or by forcing directly with $(E^{[\infty]},\preceq)$, they project to ordered-union ultrafilters on $\FIN$ via supports and thus cannot be proved to exist in $\ZFC$ alone (see \S5 and \S6 of \cite{MR3864398} for details). The following additional properties can also be obtained by the same methods:

\begin{defn}
 	A block filter $\LF$ on $E$ is:
 	\begin{enumerate}
 		\item a \emph{$(p)$-filter} (or has the \emph{$(p)$-property}) if whenever $\langle X_n\rangle\in\LF$ for all $n$, there is an $\langle X\rangle\in\LF$ such that $X\preceq^* X_n$ for all $n$.
 		\item \emph{spread} if whenever $I_0<I_1<I_2<\cdots$ is a sequence of intervals in $\omega$, there is an $\langle X\rangle\in\LF$, where $X=(x_n)$, such that for every $n$, there is an $m$ for which $I_0<\supp(x_n)<I_m<\supp(x_{n+1})$.
 		\item a \emph{strong $(p)$-filter} (or has the \emph{strong $(p)$-property}) if whenever $\langle X_{\vec{x}}\rangle\in\LF$ for all $\vec{x}\in{E^{[<\infty]}}$, there is an $\langle X\rangle\in\LF$ such that $X/\vec{x}\preceq X_{\vec{x}}$ for all $\vec{x}\sqsubseteq X$.
 	\end{enumerate}
 	In (a) and (c), the $X$ described is called a \emph{diagonalization} of $(X_n)$ or $(X_{\vec{x}})$, respectively. If $\LF$ is both full and a (strong) $(p)$-filter, then we refer to it as a \emph{(strong) $(p^+)$-filter}.
\end{defn}

Much of \cite{MR3864398} is devoted to showing that filters with these properties ``localize'' a Ramsey-theoretic dichotomy for block sequences in $E$, due to Rosendal \cite{MR2604856}, and one in Banach spaces, due to Gowers \cite{MR1954235}. Such dichotomies are phrased in terms of games:

Given $X\in{E^{[\infty]}}$, the \emph{asymptotic game} played below $X$, $F[X]$, is a two player game where the players alternate, with I going first and playing natural numbers $n_k$, and II responding with nonzero vectors $y_k$ forming a block sequence and such that $n_k<\min(\supp(y_k))$. 
\[
	\begin{matrix}
		\text{I}  & n_0 &       & n_1 &       & n_2 &       &\cdots\\
		\text{II} &	   & y_0 &       & y_1 &       & y_2 &\cdots
	\end{matrix}
\]

Likewise, the \emph{Gowers game} played below $X$, $G[X]$, is defined with I going first and playing infinite block sequences $Y_k\preceq X$, and II responding with nonzero vectors $y_k$ forming a block sequence and such that $y_k\in\langle Y_k\rangle$. 
\[
	\begin{matrix}
		\text{I}  & Y_0 &       & Y_1 &       & Y_2 &       &\cdots\\
		\text{II} &	   & y_0 &       & y_1 &       & y_2 &\cdots
	\end{matrix}
\]


In both of these games, the \emph{outcome} of a round of the game is the block sequence $(y_k)$ consisting of II's moves. The notion of a \emph{strategy} for one of the players is defined in a natural way. Given a set $\A\subseteq E^{[\infty]}$, we say that a player has a strategy for \emph{playing into (or out of)} $\A$ if they posses a strategy such that all the resulting outcomes line in (or out of) $\A$.

The following is the local form of Rosendal's dichotomy; Rosendal's original result can be recovered by simply omitting any mention of $\LF$.

\begin{thm}[Theorem 1.1 in \cite{MR3864398}]\label{thm:local_Rosendal}
	Let $\LF$ be a $(p^+)$-filter on $E$.\footnote{As mentioned in \cite{MR3864398}, an apparent weakening of the $(p)$-property akin to semiselectivity, namely that a sequence of dense open subsets of $\LF$ must possess a diagonalization in $\LF$, is all that is used in the proof of this result. However, it will be a consequence of Theorem \ref{thm:local_Rosendal_imp_p} below that, for block filters, this is equivalent to the $(p)$-property.} If $\A\subseteq{E^{[\infty]}}$ is analytic, then there is an $\langle X\rangle\in\LF$ such that either
	\begin{enumerate}[label=\textup{(\roman*)}]
		\item I has a strategy in $F[X]$ for playing out of $\A$, or
		\item II has a strategy in $G[X]$ for playing into $\A$.	
	\end{enumerate}
\end{thm}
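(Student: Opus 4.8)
The plan is to run a combinatorial forcing relative to $\LF$, modeled on the proofs of the Gowers and Rosendal dichotomies but localized to the filter. First I would reduce to the case that $\A$ is closed: writing an analytic $\A$ as the projection of a closed set $\tilde\A\subseteq{E^{[\infty]}}\times\omega^\omega$, I would have II additionally build, move by move, a candidate branch of the defining tree, so that an outcome lands in $\tilde\A$ exactly when II's auxiliary branch witnesses membership of the block-sequence coordinate in $\A$. Since the auxiliary coordinate lives in $\omega^\omega$ and does not interact with the subspace structure, $\LF$ and its $(p^+)$-properties are untouched, and a strategy for II into $\tilde\A$ (resp.\ for I out of $\tilde\A$) in the unfolded games projects to one into (resp.\ out of) $\A$ in the original games. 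So from now on assume $\A$ is closed.

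For a finite block sequence $\vec y\in{E^{[<\infty]}}$ and $\langle Y\rangle\in\LF$, say $\langle Y\rangle$ \emph{accepts} $\vec y$ if II has a strategy in $G[Y]$, starting from the position $\vec y$, all of whose outcomes $\vec t$ satisfy $\vec y\concat\vec t\in\A$; say $\langle Y\rangle$ \emph{rejects} $\vec y$ if no $\langle Z\rangle\in\LF$ with $Z\preceq Y$ accepts $\vec y$. A first observation is that both acceptance and rejection are preserved under passing to $Z\preceq Y$ (for acceptance, because restricting I to $\langle Z\rangle$ only shrinks I's legal moves while II's responses stay in the played subspace), so for each $\vec y$ the set of $\langle Y\rangle\in\LF$ deciding $\vec y$ is dense and open in $(\LF,\preceq)$. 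As ${E^{[<\infty]}}$ is countable, the $(p)$-property—in the semiselective form noted in the footnote—yields a single $\langle X\rangle\in\LF$ deciding every finite block sequence with entries in $\langle X\rangle$. If $X$ accepts $\emptyset$, then by definition II has a strategy in $G[X]$ into $\A$, which is outcome (ii).

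So suppose $X$ rejects $\emptyset$; I must produce outcome (i). The key step is a \emph{reject-propagation lemma}: if $X$ rejects $\vec y$, then $R_{\vec y}=\{z\in\langle X\rangle:\max(\supp(\vec y))<\min(\supp(z)),\ X\text{ rejects }\vec y\concat z\}$ is $\LF$-dense below $X$. Indeed, were it not, there would be $\langle W\rangle\in\LF$, $W\preceq X$, such that every infinite-dimensional subspace of $\langle W\rangle$ contains a vector $z$ (with support above $\vec y$) for which $X$ accepts $\vec y\concat z$ (here I use that $X$ decides all such $\vec y\concat z$). But then II can accept $\vec y$ below $W$: to I's first move $Y_0\preceq W$, II responds with such a $z\in\langle Y_0\rangle$ and thereafter follows the corresponding acceptance strategy—legal since all later moves of I lie below $Y_0\preceq X$—contradicting that $X$ rejects $\vec y$. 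By fullness, $R_{\vec y}\in\LF$, so there is $\langle Y_{\vec y}\rangle\in\LF$ with $Y_{\vec y}\preceq X$ all of whose sufficiently-supported vectors $z$ satisfy that $X$ rejects $\vec y\concat z$.

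Finally I would apply the $(p)$-property to the countable family $\{\langle Y_{\vec y}\rangle:\vec y\in{E^{[<\infty]}}\}$ (together with $\langle X\rangle$) to obtain $\langle X^*\rangle\in\LF$ with $X^*\preceq^* Y_{\vec y}$ for every $\vec y$. This makes reject-propagation uniform on $X^*$: for each rejected $\vec y$ there is a number $n_{\vec y}$ such that every block vector $z\in\langle X^*\rangle$ with $\min(\supp(z))>n_{\vec y}$ keeps $\vec y\concat z$ rejected. I's strategy in $F[X^*]$ is then to maintain the invariant that the current position is rejected, playing $n_{\vec y}$ at position $\vec y$; starting from the rejected $\emptyset$, every reachable position stays rejected. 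Because $\A$ is closed, an outcome all of whose initial segments are rejected cannot lie in $\A$, so I plays out of $\A$, giving outcome (i). I expect the main obstacle to be exactly this reject-propagation lemma and its interface with the topology: verifying that ``every subspace meets the accepting set'' genuinely lets II assemble an acceptance strategy inside I's chosen subspace (so that fullness can be invoked), and tying rejection to the tree of $\A$ so that closedness forces a fully-rejected branch out of $\A$ in the limit; the unfolding in the first step must also be arranged so that both constructions survive the auxiliary coordinate.
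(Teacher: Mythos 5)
This theorem is not proved in the present paper at all: it is quoted verbatim as Theorem 1.1 of \cite{MR3864398}, so there is no internal proof to compare against. Judged on its own terms, your proposal has the right general shape (combinatorial forcing with acceptance/rejection, fullness to promote the rejection-propagation set into $\LF$, and the semiselective form of the $(p)$-property for the fusion), but it contains two genuine gaps, each fatal on its own.

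First, the reduction to closed $\A$ does not work. For player II the unfolding is harmless, but your claim that a strategy for I out of $\tilde\A$ in the unfolded game ``projects'' to a strategy for I out of $\A$ in $F[X]$ is unjustified, and in fact the operation is not even well defined: the unfolded strategy's moves depend on II's auxiliary integer moves, which simply do not exist in the plain game. More substantively, an unfolded strategy for I only defeats the particular branch $\beta$ that II reveals during the play, whereas playing out of $\A$ requires defeating \emph{every} $\beta$ for the single outcome $Y$ produced; since I's numerical moves (and hence the set of reachable outcomes) vary with the auxiliary moves, ``defeats every played $(Y,\beta)$'' does not imply ``for the outcome $Y$, no $\beta$ works.'' This is precisely why the known proofs (Rosendal's, and the local one in \cite{MR3864398}) do not reduce to closed sets as a black box, but instead carry the tree representation of $\A$ through the combinatorial forcing, with II choosing the integer coordinate existentially on the acceptance side.

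Second, even granting closed $\A$, the endgame is wrong: ``an outcome all of whose initial segments are rejected cannot lie in $\A$'' is false for closed sets. Take $\A=\{Y_0\}$ a singleton with $Y_0\preceq X$. No $\langle Z\rangle\in\LF$ accepts any position, since at the position $\vec y$ the next required vector $v$ of $Y_0$ can always be excluded by I playing $\langle Z/\max(\supp(v))\rangle$; hence \emph{every} position is rejected, the sets $R_{\vec y}$ are all of $E\setminus\{0\}$, and the rejection-maintenance invariant places no constraint whatsoever on I's numbers $n_{\vec y}$ --- yet $Y_0\in\A$ has all of its initial segments rejected. So the inference from ``all initial segments rejected'' to ``outcome out of $\A$'' is invalid. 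What your forcing actually proves is the dichotomy for \emph{open} $\A$: there, $Y\in\A$ forces some initial segment $\vec y$ with $[\vec y]\subseteq\A$, and such a $\vec y$ is trivially accepted, so rejection of all initial segments does expel the outcome. Since your step 1 reduces analytic to \emph{closed} (not open) sets, the two halves of the argument do not compose. Repairing this requires a genuinely different treatment of I's side --- in the closed/analytic case I's goal ``exit the tree'' is an open winning condition, and the actual proofs exploit this (via Gale--Stewart determinacy of the auxiliary games and a more delicate fusion) rather than the rejection-maintenance argument you give.
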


While we won't deal explicitly with Banach spaces here, the spread condition, along with being $(p^+)$, was used to obtain the local form of Gowers result for Banach spaces (Theorem 1.4 in \cite{MR3864398}). These results are analogous to the way selective ultrafilters on $\omega$ and stable ordered-union ultrafilters on $\FIN$ localize the respective dichotomies for analytic partitions of $[\omega]^\omega$ and $\FIN^{[\infty]}$ (see \cite{MR0491197} and \cite{MR891244}). 

One apparent deficiency in Theorem \ref{thm:local_Rosendal} is that it is not obvious whether either conclusion, (i) or (ii), guarantees that $\LF$ meets the complement of $\A$ or $\A$ itself, respectively. This is rectified by the following assumption: 

\begin{defn}
	A block filter $\LF$ on $E$ is \emph{strategic}	if whenever $\alpha$ is a strategy for II in $G[X]$, where $\langle X\rangle\in\LF$, there is an outcome $Y$ of $\alpha$ such that $\langle Y\rangle\in\LF$.
\end{defn}

\begin{figure}
\[
	\xymatrix{\text{complete combinatorics}\ar@2{<->}[d]^{\text{ Theorem 1.2}}\\
		\text{strategic $(p^+)$-filter}\ar@2[d]^{\text{ Proposition 4.6}}\\
		\text{strong $(p^+)$-filter}\ar[d]^{\text{ Lemma 8.13}}\\
		\text{spread $(p^+)$-filter}\ar[d]\\
		\text{$(p^+)$-filter}\ar[d]^{\text{ Theorem 1.1}}\\
		\text{local Rosendal dichotomy}\ar[d]^{\text{ Proposition 3.6}} \\
		\text{full block filter}}
\]
\caption{The implications for block filters proved in \cite{MR3864398}.}
\label{fig:old_imps}
\end{figure}
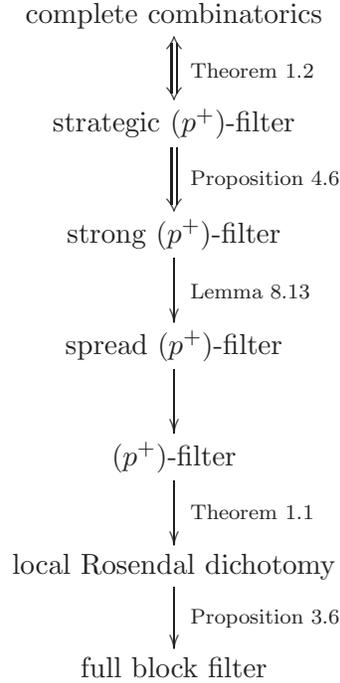

Under large cardinal assumptions, if $\LF$ is a strategic $(p^+)$-filter, then Theorem \ref{thm:local_Rosendal} can be extended to all ``reasonably definable'' subsets $\A$ and moreover, being ``strategic $(p^+)$'' exactly characterizes genericity of a block filter over the inner-model $\L(\R)$ (Theorems 1.2 and 1.3 in \cite{MR3864398}). This latter property is known in the literature as having \emph{complete combinatorics}.\footnote{Originally, ``complete combinatorics'' was used to describe genericity over $\mathrm{HOD}(\R)^{\V[G]}$, where $G$ is $\V$-generic for the Le\'vy collapse of a Mahlo cardinal \cite{MR996504}, a property proved (implicitly) for selective ultrafilters in \cite{MR0491197}. The contemporary usage avoids passing to a L\'evy collapse extension at the expense of stronger large cardinal hypotheses. The $\ZFC$ content of ``complete combinatorics'', in all examples of which the author is aware, is that the filter meets all dense open analytic sets, in the relevant $\sigma$-distributive Suslin partial order. That this characterizes strategic $(p^+)$-filters in $\ZFC$ is implicit in \cite{MR3864398}.}

Figure \ref{fig:old_imps} shows the implications between these various properties of block filters, as proved in \cite{MR3864398}, with references to the relevant propositions therein (the implication from ``spread $(p^+)$'' to ``$(p^+)$'' is trivial). The unidirectional double arrow $\Longrightarrow$ indicates a strict implication; when $|F|>2$, it is consistent with $\ZFC$ that there is a strong $(p^+)$-filter which is not strategic (this is Corollary 8.9 in the recent preprint \cite{Smythe:ParaRamseyBlockI}).

The goal of the present article is to investigate the combinatorics of block filters having the above properties, as well the possibility of reversing the remaining arrows in Figure \ref{fig:old_imps}. We begin by considering the special case of the finite field of order $2$ and its relationship to $\FIN$, where a complete analysis is possible. In general, we will see that the cardinality of the field $F$, in so far as it is either $2$, finite, or infinite, plays an important role. Those interested in spoilers may skip ahead to Figure \ref{fig:new_imps}. We also prove alternate characterizations of strong $(p^+)$-filters (Theorem \ref{thm:strong_p_restr_game}) and strategic $(p^+)$-filters (Theorem \ref{thm:+_strat}) using a restricted version of the Gowers game.

The results below are inspired by the various equivalent characterizations of selectivity for ultrafilters, originally proved by Booth and Kunen in \cite{MR0277371} (see Chapter 11 of \cite{MR3751612} for a modern treatment), and of stability for ordered-union ultrafilters, proved by Blass in \cite{MR891244}. Some of our result originate in the author's PhD thesis \cite{Smythe_thesis}, but have remained otherwise unpublished, while others are making their first appearance here.

\section{$\FIN$ and the finite field of order two}

A sequence $(a_n)$ in $\FIN$ is a \emph{block sequence} if $\max(a_n)<\min(a_{n+1})$ for all $n$. The set of all infinite block sequences in $\FIN$ is denoted by $\FIN^{[\infty]}$ and inherits a Polish topology from $\FIN^\omega$. We write $\FIN^{[<\infty]}$ for the set of finite block sequences in $\FIN$. Given $A,B\in\FIN^{[\infty]}$, we write $\langle A\rangle$ for the set of all finite unions of entries from $A$ and $A\preceq B$ if $\langle A\rangle\subseteq\langle B\rangle$ (likewise for $A/n$ and $A\preceq^* B$) to agree with our notation above. We reserve the notation $\langle A\rangle$ for when $A$ is a block sequence. For $A\in\FIN^{[\infty]}$, we write $A^{[\infty]}$ for the set of those $B\in\FIN^{[\infty]}$ such that $B\preceq A$.

The Ramsey theory for $\FIN$ is largely a consequence of the finite-unions form of Hindman's Theorem \cite{MR0349574}: 

\begin{thm}
	For any $C\subseteq\FIN$, there is an $A\in\FIN^{[\infty]}$ such that either $\langle A\rangle\subseteq C$ or $\langle A\rangle\cap C=\emptyset$.	
\end{thm}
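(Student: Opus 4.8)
The plan is to prove this by the Galvin--Glazer idempotent-ultrafilter method, adapted to the partial semigroup of finite sets under \emph{ordered} union. Since the entries of a block sequence are pairwise disjoint and strictly increasing (in the sense that $\max$ of one lies below $\min$ of the next), the set $\langle A\rangle$ is exactly the collection of ordered unions $\bigcup_{i\in s}a_i$ over finite nonempty $s\subseteq\omega$. Thus it suffices to produce a block sequence $A=(a_n)$ all of whose ordered unions lie on a single side of the partition $\{C,\FIN\setminus C\}$; the two alternatives in the statement correspond to which side.

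First I would build the semigroup of ultrafilters. Let $\gamma$ be the set of ultrafilters $p$ on $\FIN$ with $\{a:\min(a)>n\}\in p$ for every $n$. This is a nested intersection of clopen subsets of the Stone--\v{C}ech compactification $\beta\FIN$, hence a nonempty compact set. For $p,q\in\gamma$ define
\[
	p\cdot q=\bigl\{A\subseteq\FIN:\{a:\{b:\max(a)<\min(b),\ a\cup b\in A\}\in q\}\in p\bigr\}.
\]
Because every $q\in\gamma$ contains all tails $\{b:\min(b)>\max(a)\}$, the ordering constraint is $q$-negligible, which is exactly what makes $\cdot$ well defined, associative, valued again in $\gamma$, and right-continuous in its first argument. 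Hence $(\gamma,\cdot)$ is a compact right-topological semigroup, and Ellis's lemma yields an idempotent $p=p\cdot p$.

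Fix such a $p$, and write $a^{-1}A=\{b:\max(a)<\min(b),\ a\cup b\in A\}$ and $A^\star=\{a\in A:a^{-1}A\in p\}$. Since $p$ is an ultrafilter, exactly one of $C,\FIN\setminus C$ lies in $p$; relabel so that $C\in p$. Idempotency gives the two facts driving the extraction: (i) $A^\star\in p$ whenever $A\in p$ (just unwinding $A\in p\cdot p$), and (ii) $a^{-1}(A^\star)\in p$ for every $a\in A^\star$ (this uses associativity). I would then recursively choose $a_0\in C^\star$ and, having chosen $a_0,\dots,a_n$ so that $\bigcup_{i\in F}a_i\in C^\star$ for every nonempty $F\subseteq\{0,\dots,n\}$, pick
\[
	a_{n+1}\in C^\star\cap\{b:\min(b)>\max(a_n)\}\cap\bigcap_{\emptyset\neq F\subseteq\{0,\dots,n\}}\Bigl(\bigcup_{i\in F}a_i\Bigr)^{-1}C^\star.
\]
This set lies in $p$ by (i), (ii), and the defining property of $\gamma$, so it is nonempty; the tail factor keeps $(a_n)$ a block sequence. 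A short case analysis on whether $n+1\in G$ shows the invariant $\bigcup_{i\in G}a_i\in C^\star$ persists, so $\langle A\rangle\subseteq C^\star\subseteq C$.

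The technical heart, and the main obstacle, is the verification that $\cdot$ is a well-defined associative operation on $\gamma$ and that idempotency yields (i) and (ii); the existence of an idempotent is the standard Ellis argument and can simply be cited. Once these are in hand the recursion is routine bookkeeping. An alternative would be to deduce the finite-unions form from the additive Hindman theorem on $(\N,+)$ via the binary encoding $a\mapsto\sum_{i\in a}2^i$, but then the disjointness and ordering required for a genuine block sequence must be recovered by hand, which is handled far more transparently inside the partial-semigroup framework above.
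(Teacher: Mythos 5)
Your proof is correct, but note that the paper itself offers no proof of this statement: it is Hindman's Theorem in its finite-unions form, quoted with a citation to Hindman's original paper, so the only comparison available is with that cited argument. What you give is the standard Galvin--Glazer idempotent-ultrafilter proof, transplanted to the partial semigroup of ordered unions on $\FIN$: your $\gamma$ is the usual space of ``cofinite'' ultrafilters, the operation $p\cdot q$ is well defined precisely because every $q\in\gamma$ concentrates on the tails $\{b:\max(a)<\min(b)\}$, Ellis's lemma supplies an idempotent, and your facts (i) and (ii) are the standard consequences of idempotency --- for (ii) the point is the inclusion $(a^{-1}A)^\star\subseteq a^{-1}(A^\star)$, whose verification is exactly where the tail sets and the associativity computation get used, so you are right to isolate that as the technical heart rather than gloss over it. The closing recursion, with the tail factor forcing blockness and the case split on whether $n+1$ lies in $G$, is the standard extraction and is carried out correctly; the invariant $\bigcup_{i\in G}a_i\in C^\star$ does persist for exactly the reasons you give. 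The trade-off versus the cited source: Hindman's original proof is elementary but long and combinatorially intricate, whereas your route is short and conceptual at the cost of non-constructive machinery ($\beta\FIN$, Zorn's lemma via Ellis); since the ambient paper freely works with ultrafilters on $\FIN$ anyway, nothing is lost in this context, and the partial-semigroup formulation has the further advantage (as you note) of producing a genuine block sequence directly, rather than recovering disjointness and ordering by hand from the additive version on $\N$ via binary expansions.
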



The relevant notions for ultrafilters on $\FIN$ were defined by Blass in \cite{MR891244}: An ultrafilter $\LF$ of subsets of $\FIN$ is \emph{ordered-union} if it has a base of sets of the form $\langle A\rangle$. $\LF$ is \emph{stable} if whenever $\langle A_n\rangle\in\LF$ for all $n\in\omega$, there is a $\langle B\rangle\in\LF$ such that $B\preceq^* A_n$ for all $n$. These are connected to block filters on $E$ via the support map:

\begin{thm}[Theorem 6.3 in \cite{MR3864398}]\label{thm:supp_ordered_union}
	If $\LF$ is a full block filter on $E$, then
	\[
		\supp(\LF)=\{A\subseteq\FIN:\exists X\in\LF(A\supseteq\{\supp(v):v\in X\})\}
	\]	
	is an ordered-union ultrafilter on $\FIN$. If, moreover, $\LF$ is a $(p)$-filter, then $\supp(\LF)$ is stable.
\end{thm}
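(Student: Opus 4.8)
The plan is to establish the two assertions in sequence: first that $\supp(\LF)$ is an ordered-union ultrafilter, and second that the $(p)$-property of $\LF$ transfers to stability of $\supp(\LF)$.

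For the first assertion, I would begin by verifying that $\supp(\LF)$ is a proper filter on $\FIN$: it is upward closed by definition, and closed under finite intersections because $\LF$ is $\preceq$-downwards directed---given $A_1,A_2\in\supp(\LF)$ witnessed by $X_1,X_2\in\LF$, a common lower bound $\langle X\rangle\subseteq\langle X_1\rangle\cap\langle X_2\rangle$ in $\LF$ gives $\{\supp(v):v\in X\}$, which I must check lies in $\langle A_1\rangle\cap\langle A_2\rangle$ appropriately; the key point is that if $v\in\langle X_1\rangle$ then $\supp(v)$ is a finite union of the $\supp$'s of entries of $X_1$, so it lands in $\langle\{\supp(w):w\in X_1\}\rangle\subseteq\langle A_1\rangle$. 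To see it has an ordered-union base, I note that for $X\in\LF$ the set $A_X=\{\supp(v):v\in X\}$ need not itself be a block sequence in the union sense, but after thinning $X$ to a genuine block sequence (which it already is) the supports $\supp(x_n)$ form a block sequence in $\FIN$, and $\langle A_X\rangle$ in the finite-union sense is contained in $\supp(\LF)$; these sets form the desired base. The ultrafilter property is where fullness enters: given any $C\subseteq\FIN$, I would pull back along $\supp$ to a subset $D=\supp^{-1}(C)\subseteq E$ (or its complement) and apply Hindman's Theorem inside each $\langle X\rangle$ to produce a block subspace whose supports lie entirely in $C$ or entirely outside $C$; this shows one of $D$, $E\setminus D$ is $\LF$-dense, hence in $\LF$ by fullness, which forces one of $C$, $\FIN\setminus C$ into $\supp(\LF)$.

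For the second assertion, suppose $\LF$ is additionally a $(p)$-filter and $\langle A_n\rangle\in\supp(\LF)$ for all $n$. I would lift each $A_n$ to some $X_n\in\LF$ with $\{\supp(v):v\in X_n\}\subseteq A_n$, apply the $(p)$-property to obtain $\langle X\rangle\in\LF$ with $X\preceq^* X_n$ for all $n$, and then push forward: setting $B=\{\supp(x):x\in X\}$ (read as a block sequence in $\FIN$), the relation $X/k_n\preceq X_n$ translates under $\supp$ into $B/\ell_n\preceq A_n$ for suitable $\ell_n$, giving $B\preceq^* A_n$ as required for stability.

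The main obstacle will be the careful bookkeeping in translating $\preceq$ and $\preceq^*$ between $E$ and $\FIN$ under the support map, since $\supp$ is not injective and a single vector's support is already a finite union of basis-support blocks; I must confirm that $X\preceq X_n$ in $E$ genuinely yields $\{\supp(v):v\in X\}\preceq\{\supp(w):w\in X_n\}$ in $\FIN$, which relies on the fact that supports of linear combinations are contained in unions of supports. The other delicate point is extracting the Hindman-style homogeneity within a prescribed block subspace rather than all of $\FIN$; this requires applying Hindman's Theorem relativized to $\langle A_X\rangle$, which is legitimate since any $\langle A\rangle$ is itself a copy of $\FIN$ under the obvious renaming, and then lifting the resulting homogeneous block sequence back to a block subspace of $\langle X\rangle$ to witness $\LF$-density.
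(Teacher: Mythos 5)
This paper never proves Theorem \ref{thm:supp_ordered_union}; it is quoted verbatim from Theorem 6.3 of \cite{MR3864398}, so there is no in-paper proof to compare against. Your argument is nevertheless correct, and it is the same natural route used in the cited source: the sets $\{\supp(v):v\in\langle Y\rangle\}=\langle\supp(Y)\rangle$ for $\langle Y\rangle\in\LF$ form an ordered-union base (no cancellation across blocks, since entries of a block sequence have pairwise disjoint supports); the ultrafilter property comes from pulling $C\subseteq\FIN$ back to $D=\supp^{-1}(C)$, applying Hindman's theorem relativized below $\langle\supp(X)\rangle$ for $\langle X\rangle\in\LF$, and invoking fullness; stability comes from lifting the $A_n$, applying the $(p)$-property, and pushing forward along $\supp$, which indeed carries $\preceq^*$ to $\preceq^*$. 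Two compressed steps deserve to be written out. First, the witness $X\in\LF$ in the definition of $\supp(\LF)$ is an arbitrary filter set, and the containment it gives is $\{\supp(v):v\in X\}\subseteq\langle A_n\rangle$, not $\subseteq A_n$; before applying the $(p)$-property you should shrink each witness to a set of the form $\langle Y_n\rangle\in\LF$ with $\supp(Y_n)\preceq A_n$, which is harmless exactly because $\LF$ is a block filter. Second, the claim that ``one of $D$, $E\setminus D$ is $\LF$-dense'' does not follow from homogenization alone; it needs downward directedness of $\LF$: if neither were $\LF$-dense, with witnesses $\langle X_0\rangle,\langle X_1\rangle\in\LF$, then a common lower bound $\langle X\rangle\in\LF$ would contain a Hindman-homogeneous block subspace lying inside $D$ or inside $E\setminus D$, contradicting one of the two witnesses. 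With these details supplied, your proof is complete.
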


In the case when $|F|=2$, nonzero vectors in $E$ can be identified with their supports in $\FIN$, addition of vectors with disjoint supports corresponds to their union, and scalar multiplication trivializes. Thus, the study of block sequences in $E$ reduces to the study of block sequences in $\FIN$, and $\LF$ is a full ($(p^+)$, respectively) block filter if and only if it is an (stable) ordered-union ultrafilter: One direction is Theorem \ref{thm:supp_ordered_union}, while the converse follows from the fact that if $D\subseteq E$ is $\LU$-dense and $\LU$ is an \emph{ultra}filter, then $D\in\LU$.

We will see in Theorem \ref{thm:local_Rosendal_imp_p} below that the second to last implication in Figure \ref{fig:old_imps} reverses: For block filters, being a $(p^+)$-filter is equivalent to satisfying Theorem \ref{thm:local_Rosendal_imp_p}, regardless of the field. The $|F|=2$ case highlights a difficulty in understanding the last implication in Figure \ref{fig:old_imps}; whether it reverses in this case is equivalent to whether every ordered-union ultrafilter is stable, a long-standing open problem (see, e.g., \cite{MR2926315}). We will not attempt to shed any additional light on this question here.

While Theorem \ref{thm:local_Rosendal} can be rephrased for stable ordered-union ultrafilters and $\FIN$, a stronger result holds; stable ordered-union ultrafilters localize the infinite-dimensional form of Hindman's Theorem \cite{MR0349574} due to Milliken and Taylor \cite{MR0373906} \cite{MR424571}. This is one of several equivalents to stability proved in \cite{MR891244}:

\begin{thm}[Theorem 4.2 in \cite{MR891244}]\label{thm:local_Milliken_Taylor}
	Let $\LU$ be an ordered-union ultrafilter on $\FIN$. The following are equivalent:
	\begin{enumerate}[label=\textup{(\roman*)}]
		\item $\LU$ is stable.
		\item For any analytic set $\A\subseteq\FIN^{[\infty]}$, there is an $\langle A\rangle\in\LU$ such that either $A^{[\infty]}\subseteq\A$ or $A^{[\infty]}\cap\A=\emptyset$.
	\end{enumerate}
\end{thm}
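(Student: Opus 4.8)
The plan is to prove the two implications separately, with (ii) $\Rightarrow$ (i) being the routine direction and the localized Milliken--Taylor theorem (i) $\Rightarrow$ (ii) being the substantial one.

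For (ii) $\Rightarrow$ (i), I would encode stability as a partition problem. Given $\langle A_n\rangle\in\LU$ for all $n$, set
\[
	\A=\{B\in\FIN^{[\infty]}:b_n\in\langle A_n\rangle\text{ for all }n\},
\]
where $b_n$ denotes the $n$th entry of $B$; this set is closed, hence analytic. Applying (ii) yields $\langle A\rangle\in\LU$ with $A^{[\infty]}\subseteq\A$ or $A^{[\infty]}\cap\A=\emptyset$. The second alternative is impossible: since $\LU$ is a filter and (as is standard for ordered-union ultrafilters) tails $\langle A/k\rangle$ of members remain in $\LU$, each $\langle A\rangle\cap\langle A_n\rangle$ lies in $\LU$ and contains blocks of arbitrarily large minimum, so one may greedily build $B\preceq A$ with $b_n\in\langle A_n\rangle$, i.e.\ $B\in A^{[\infty]}\cap\A$. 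Thus $A^{[\infty]}\subseteq\A$. Finally, fixing $n$ and placing an arbitrary block $c\in\langle A\rangle$ lying above the first $n$ entries of $A$ as the $n$th entry of some $B\preceq A$ forces $c\in\langle A_n\rangle$; hence $\langle A/k_n\rangle\subseteq\langle A_n\rangle$ for a suitable $k_n$, so $A\preceq^* A_n$ for every $n$ and $\langle A\rangle$ witnesses stability.

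For (i) $\Rightarrow$ (ii) I would run the Galvin--Prikry/Nash-Williams combinatorial forcing, localized to $\LU$, with stability serving as the analogue of selectivity. First I would reduce to the case where $\A$ is open, representing a general analytic $\A$ as the projection of a closed subset of $\FIN^{[\infty]}\times\omega^\omega$ and running the forcing on this unfolded space (equivalently, showing that the sets satisfying (ii) form a $\sigma$-algebra closed under the Souslin operation and containing the open sets, a localized abstract Ellentuck theorem). For the open case, say that $A$ \emph{accepts} a finite block sequence $\vec s$ with blocks below those of $A$ if $[\vec s,A]:=\{D:\vec s\sqsubseteq D,\ D/\vec s\preceq A\}\subseteq\A$, and \emph{rejects} $\vec s$ if no $C\preceq A$ accepts $\vec s$. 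The finite-unions form of Hindman's Theorem is the pigeonhole driving the two basic forcing lemmas: every $A$ has a refinement deciding a given $\vec s$, and if $A$ rejects $\vec s$ then some $B\preceq A$ rejects $\vec s\concat b$ for all admissible $b\in\langle B\rangle$ (color each such $b$ by whether it can be accepted after further refinement, extract a homogeneous block sequence via Hindman, and observe that the accepting color is excluded by the rejection of $\vec s$).

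The crux, and where stability is indispensable, is the fusion: iterating the two lemmas along all finite initial segments produces a $\preceq$-decreasing sequence $\langle B_0\rangle\supseteq\langle B_1\rangle\supseteq\cdots$ in $\LU$, and I would invoke stability to obtain a single $\langle B\rangle\in\LU$ with $B\preceq^* B_m$ for all $m$, organized so that $B$ either accepts $\emptyset$ or hereditarily rejects. In the first case $B^{[\infty]}=[\emptyset,B]\subseteq\A$; in the second, openness of $\A$ forces $B^{[\infty]}\cap\A=\emptyset$, since any $D\in B^{[\infty]}\cap\A$ would, by openness, have an initial segment $\vec s=D\restr m$ with $[\vec s,B]\subseteq\A$, so $B$ would accept $\vec s$, contradicting hereditary rejection. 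The main obstacle is precisely this step of keeping the homogeneous block sequence \emph{inside} $\LU$: the unlocalized Milliken--Taylor theorem needs only a bare fusion, whereas here the pseudo-intersection must itself be a member of the ultrafilter, which is exactly the content of stability. Organizing the fusion together with the unfolding of the analytic set so that a single $\langle B\rangle\in\LU$ simultaneously decides every relevant $\vec s$ is the technically heaviest part of the argument.
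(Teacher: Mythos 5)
A preliminary remark: the paper does not prove this statement at all --- it is quoted verbatim from Blass (Theorem 4.2 of \cite{MR891244}), so there is no internal proof to compare against; your argument has to be judged against Blass's. Your direction (ii) $\Rightarrow$ (i) is correct and is the standard argument: the set $\A$ you define is closed, the negative alternative is ruled out by a greedy construction using the filter property and the block base of $\LU$, and the positive alternative yields $A\preceq^* A_n$ by inserting an arbitrary high block of $\langle A\rangle$ as the $n$th entry of some $B\preceq A$. No complaints there.

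The direction (i) $\Rightarrow$ (ii) has a genuine gap, located exactly at your parenthetical proof of the rejection-extension lemma. You propose to color the admissible blocks $b$ by whether $\vec s\concat b$ can be accepted and to ``extract a homogeneous block sequence via Hindman.'' But Hindman's theorem returns a homogeneous $B\preceq A$ with no control over whether $\langle B\rangle\in\LU$, while your fusion scheme (as you yourself stress at the end) requires every member of the decreasing sequence $\langle B_0\rangle\supseteq\langle B_1\rangle\supseteq\cdots$ to lie in $\LU$, since stability only diagonalizes sequences inside the ultrafilter. In the localized argument the pigeonhole is not Hindman's theorem but the ultrafilter property itself: the set $G=\{b:\vec s\concat b\text{ can be accepted}\}$ or its complement lies in $\LU$, and whichever does contains some $\langle B\rangle$ with $\langle B\rangle\in\LU$ because $\LU$ is ordered-union. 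This substitution is not cosmetic, because it changes what must be proved in your step ``the accepting color is excluded by the rejection of $\vec s$.'' Note first that your notion of rejection (``no $C\preceq A$ accepts $\vec s$'') must anyway be replaced by rejection relative to $\LU$ (``no $C\preceq A$ with $\langle C\rangle\in\LU$ accepts $\vec s$''), since otherwise the positive horn of the final dichotomy produces an accepting $C$ that need not lie in $\LU$, and (ii) demands a witness in $\LU$. But with rejection so localized, excluding the accepting color requires assembling, from the witnesses $C_b\in\LU$ accepting $\vec s\concat b$ for $b\in G$, a single $D$ with $\langle D\rangle\in\LU$ and $D/b\preceq C_b$ for every $b\in\langle D\rangle$; only then does $D$ accept $\vec s$ and contradict rejection. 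This aligned, $\FIN$-indexed diagonalization is precisely the ``selectivity'' property of ordered-union ultrafilters: it is not an instance of stability (which diagonalizes $\omega$-indexed sequences), and indeed the paper under review derives it as a consequence of the very theorem you are proving (its Corollary 2.7), so you cannot assume it here. Deriving it from stability is where the real work of Blass's proof lies (for instance, via the fact, established separately for all ordered-union ultrafilters, that $\min(\LU)$ and $\max(\LU)$ are selective, which supplies the q-point-like ``spreading'' needed to align a stability-diagonalization with the blocks). Your sketch is silent on this, and as written it stalls at this lemma: either the fusion leaves $\LU$ (if you use Hindman), or the contradiction with rejection cannot be completed (if you stay in $\LU$ with only stability in hand).
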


Assuming large cardinal hypotheses, the methods of \cite{MR1644345} can be used to extend (ii) to all subsets $\A$ in $\L(\R)$ and prove complete combinatorics for stable ordered-union ultrafilters (see also the discussion on p.~121-122 of \cite{MR891244}). Consequently, when $|F|=2$, all but possibly the last of the conditions in Figure \ref{fig:old_imps} are equivalent. To see this directly:

\begin{cor}\label{cor:stable_ordered_strat}
	If $\LU$ is a stable ordered-union ultrafilter, then $\LU$ is strategic.	
\end{cor}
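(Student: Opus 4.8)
The plan is to apply the local Milliken--Taylor dichotomy (Theorem \ref{thm:local_Milliken_Taylor}) to the set of outcomes of a given strategy for II. Fix $\langle A\rangle\in\LU$ and a strategy $\alpha$ for II in the Gowers game $G[A]$; since $|F|=2$ we work throughout in $\FIN$. Let $\mathrm{Out}(\alpha)\subseteq\FIN^{[\infty]}$ denote the set of all block sequences $(b_k)$ arising as outcomes of plays in which II follows $\alpha$. I first observe that $\mathrm{Out}(\alpha)$ is analytic: it is the image of the Polish space of legal plays $(B_k)$ by I (sequences of infinite block sequences $\preceq A$) under the map $(B_k)\mapsto(\alpha(B_0,\dots,B_k))_k$, and this map is continuous because each coordinate $b_k$ depends only on the finite initial segment $B_0,\dots,B_k$.

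Next I apply Theorem \ref{thm:local_Milliken_Taylor} with $\A=\mathrm{Out}(\alpha)$, using that $\LU$ is stable, to obtain $\langle B\rangle\in\LU$ with either $B^{[\infty]}\subseteq\mathrm{Out}(\alpha)$ or $B^{[\infty]}\cap\mathrm{Out}(\alpha)=\emptyset$. Since $\LU$ is an ordered-union ultrafilter, it has a base of sets $\langle C\rangle$ which is downward directed under $\preceq$, so I may pass to some $\langle C\rangle\in\LU$ with $C\preceq A$ and $C\preceq B$; as $C^{[\infty]}\subseteq B^{[\infty]}$, the same dichotomy holds for $C$.

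The crux is to rule out the second alternative, and this is where the strength of a strategy for II is used. I let I play the tails of $C$: put $B_0=C$ and, having produced the partial outcome $b_0,\dots,b_{k-1}$, let I play $B_k=C/b_{k-1}$ (the tail of $C$ above $\supp(b_{k-1})$), which is a legal move since $C\preceq A$. Every response $b_k=\alpha(B_0,\dots,B_k)$ then lies in $\langle B_k\rangle\subseteq\langle C\rangle$ with supports strictly above those of $b_{k-1}$, so the resulting outcome $(b_k)$ is an infinite block sequence with $(b_k)\preceq C$. Thus $(b_k)\in C^{[\infty]}\cap\mathrm{Out}(\alpha)$, showing this intersection is nonempty.

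Therefore the first alternative must hold: $C^{[\infty]}\subseteq\mathrm{Out}(\alpha)$. In particular $C\in C^{[\infty]}\subseteq\mathrm{Out}(\alpha)$, so $C$ is itself an outcome of $\alpha$, and $\langle C\rangle\in\LU$ witnesses that $\LU$ is strategic. The only delicate points I anticipate are the verification that $\mathrm{Out}(\alpha)$ is genuinely analytic, so that Theorem \ref{thm:local_Milliken_Taylor} applies, and the bookkeeping ensuring the driven play stays legal and yields an infinite outcome below $C$; the dichotomy then does all the real work.
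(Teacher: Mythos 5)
There is a genuine gap at the very first step: your claim that $\mathrm{Out}(\alpha)$ is analytic. The outcome map $(B_k)\mapsto(\alpha(B_0,\dots,B_k))_k$ is \emph{not} continuous in general. Each coordinate $b_k$ does depend only on the moves $B_0,\dots,B_k$, but these are finitely many \emph{infinite} objects (points of the uncountable Polish space $A^{[\infty]}$), not a finite amount of information. A map from $(A^{[\infty]})^{k+1}$ into the countable discrete set $\FIN$ is continuous if and only if it is locally constant, and a strategy is an arbitrary function: nothing forces $\alpha(B_0,\dots,B_k)$ to be determined by finite initial segments of the $B_i$'s. (Your argument would be correct for the asymptotic game $F[A]$, where I's moves are natural numbers; it is precisely for the Gowers game, where I's moves are themselves infinite block sequences, that it breaks down.) Thus $\mathrm{Out}(\alpha)$ is the image of a Polish space under an arbitrary, possibly non-Borel, map, and there is no reason for it to be analytic, so Theorem \ref{thm:local_Milliken_Taylor} cannot be applied to it as you do.

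This gap is exactly what the paper's proof is designed to avoid: it invokes Lemma 4.7 of \cite{MR3864398}, which produces an analytic (indeed closed) set $\A\subseteq\mathrm{Out}(\alpha)$ of outcomes of $\alpha$ that is dense below $A$ in $(\FIN^{[\infty]},\preceq)$; the idea is a tree construction as in Lemma \ref{lem:H_dense_tree}, where one fixes a single witnessing move of I for each finite partial outcome, so that the branches of the resulting countable tree of partial outcomes are genuine outcomes of $\alpha$. With $\A$ in place of $\mathrm{Out}(\alpha)$, the remainder of your argument survives with one adjustment: your ``tail play'' ($B_0=C$, $B_k=C/b_{k-1}$) shows that $\mathrm{Out}(\alpha)$ meets $C^{[\infty]}$, but what is needed is that $\A$ meets $C^{[\infty]}$, and this follows by running the same recursion against the extension property of the tree (choose $z_0\in\langle C\rangle$ with $(z_0)\in T$, then $z_1\in\langle C/z_0\rangle$ with $(z_0,z_1)\in T$, and so on) rather than against $\alpha$ directly. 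The dichotomy then yields $C^{[\infty]}\subseteq\A$, hence $C\in\A\subseteq\mathrm{Out}(\alpha)$ and $\langle C\rangle\in\LU$, which is the paper's conclusion. So your skeleton (dichotomy plus a density argument to kill the negative alternative) is the right one, but the analyticity step is a real missing ingredient, not a routine verification.
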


\begin{proof}
	Let $\langle A\rangle\in\LU$ and $\alpha$ be a strategy for II in $G[A]$.	By Lemma 4.7 in \cite{MR3864398}, there is an analytic set $\A$ of outcomes of $\alpha$ which is dense below $A$, in the sense of forcing with $(\FIN^{[\infty]},\preceq)$. By Theorem \ref{thm:local_Milliken_Taylor} applied to $A^{[\infty]}$, there is a $B\preceq A$ with $\langle B\rangle\in\LU$ such that either $B^{[\infty]}\subseteq\A$ or $B^{[\infty]}\cap \A=\emptyset$. Since $\A$ is dense below $B$, the latter is impossible. In particular, $B\in \A\subseteq[\alpha]$, so $\LU$ contains an outcome of $\alpha$. Thus, $\LU$ is strategic.
\end{proof}

Another variation on stability appears in the literature \cite{MR2330595} \cite{MR3717938}: An ultrafilter $\LU$ on $\FIN$ is \emph{selective} if it is ordered-union and whenever $\langle A_a\rangle\in\LU$ for all $a\in\FIN^{[<\infty]}$, there is a $\langle B\rangle\in\LU$ such that $B/a\preceq A_a$ for all $a\preceq B$. Note the resemblance to our strong $(p)$-property. While this property appears stronger than stability, it is, again, equivalent:

\begin{cor}
	If $\LU$ is a stable ordered-union ultrafilter on $\FIN$, then $\LU$ is selective.
\end{cor}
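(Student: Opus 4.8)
The quickest route is to run the computation through the dictionary established above for the case $|F|=2$. Under that dictionary, full block filters are precisely ordered-union ultrafilters and $(p^+)$-filters are precisely stable ones; moreover, identifying $E^{[<\infty]}$ with $\FIN^{[<\infty]}$, the initial-segment relation $\vec{x}\sqsubseteq X$ with $a\sqsubseteq B$, and $X/\vec{x}\preceq X_{\vec{x}}$ with $B/a\preceq A_a$, the strong $(p)$-property becomes verbatim the displayed selectivity condition. So I would argue: $\LU$, being stable ordered-union, is a $(p^+)$-filter; by Corollary \ref{cor:stable_ordered_strat} it is strategic, hence a strategic $(p^+)$-filter; by Proposition 4.6 of \cite{MR3864398} (the top implication of Figure \ref{fig:old_imps}) it is a strong $(p^+)$-filter; and translating back through the dictionary, a strong $(p^+)$-filter over the field of order two is exactly a selective ordered-union ultrafilter.

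If instead one wants a self-contained argument, paralleling the proof of Corollary \ref{cor:stable_ordered_strat}, I would build the witness $B$ by fusion. Given $\langle A_a\rangle\in\LU$ for all $a\in\FIN^{[<\infty]}$, construct $B=(b_0,b_1,\dots)$ together with a $\preceq$-decreasing sequence of conditions $C_0\succeq C_1\succeq\cdots$ in $\LU$ so that $b_n$ is an initial block of $C_n$ and $C_{n+1}\preceq A_a$ for every $a\sqsubseteq(b_0,\dots,b_n)$; since at stage $n$ there are only finitely many such $a$, the required $C_{n+1}$ is obtained by intersecting finitely many members of $\LU$. A quick check then shows that for each initial segment $a=(b_0,\dots,b_{k-1})$ of $B$ and each $j\ge k$ one has $b_j\in\langle C_j\rangle\subseteq\langle A_a\rangle$, which is exactly $B/a\preceq A_a$ for all $a\sqsubseteq B$.

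The main obstacle is not the inclusion conditions, which the filter handles level by level, but guaranteeing that $\langle B\rangle\in\LU$: the fusion only yields $B\preceq^* C_n$ for every $n$, and stability by itself provides \emph{some} $\preceq^*$-diagonal in $\LU$, not the specific thinned sequence $B$. This is the familiar exact-tail (``$Q$-point'') gap, and bridging it is precisely what the Milliken--Taylor partition theorem supplies. Concretely, I would code the admissible fusions as an analytic set $\A\subseteq\FIN^{[\infty]}$ that is dense below a starting condition in $(\FIN^{[\infty]},\preceq)$, apply Theorem \ref{thm:local_Milliken_Taylor} to obtain a homogeneous $\langle B\rangle\in\LU$, and rule out the alternative $B^{[\infty]}\cap\A=\emptyset$ by density exactly as in Corollary \ref{cor:stable_ordered_strat}; this forces $B\in\A$ and hence into $\LU$. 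Thus the self-contained route is really the strategic argument of Corollary \ref{cor:stable_ordered_strat} packaged to deliver the stronger, tree-indexed diagonalization.
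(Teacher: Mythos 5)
Your overall strategy is sound, and your ``self-contained'' route is essentially the paper's own proof: the paper also encodes the desired diagonalizations as an analytic subset of $\FIN^{[\infty]}$, applies Theorem \ref{thm:local_Milliken_Taylor}, and rules out the negative alternative by density. The only implementation difference is in how density is arranged: rather than working below a carefully chosen starting condition, the paper enlarges the target set to $\D_0\cup\D_1$, where $\D_1$ is the set of $B$ such that $\langle B\rangle$ and the $\langle A_a\rangle$'s do not generate a filter; this union is dense open in all of $(\FIN^{[\infty]},\preceq)$, and the homogeneous $\langle B\rangle\in\LU$ cannot lie in $\D_1$ because all the sets involved belong to the ultrafilter. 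Your variant (fusion works below any member of $\LU$, which suffices since the Milliken--Taylor homogeneous set is itself in $\LU$) is a legitimate substitute for the $\D_1$ device, and your first route through Corollary \ref{cor:stable_ordered_strat} and Proposition 4.6 of \cite{MR3864398} is also viable in outline.

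There is, however, one genuine gap, and it affects both of your routes: the paper's definition of selectivity demands $B/a\preceq A_a$ for \emph{all} $a\preceq B$, i.e., for every finite block sequence whose entries are finite unions of entries of $B$ --- not merely for initial segments $a\sqsubseteq B$. So your dictionary claim that the strong $(p)$-property becomes ``verbatim'' the selectivity condition is not correct: the strong $(p)$-property quantifies only over $\vec{x}\sqsubseteq X$, and your fusion likewise only makes promises for $a\sqsubseteq(b_0,\dots,b_n)$, so both deliver the weaker, initial-segment conclusion. The gap is fixable with a small additional argument: at stage $n$ of the fusion demand $C_{n+1}\preceq A_a$ for every $a\preceq(b_0,\dots,b_n)$ (still only finitely many such $a$, since each entry of such an $a$ is a union of the $b_i$'s); then for an arbitrary $c\preceq B$, letting $a$ be the least initial segment of $B$ with $c\preceq a$, one checks $\max(\bigcup c)=\max(\bigcup a)$, hence $B/c=B/a\preceq A_c$. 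Equivalently, pre-process the family by choosing $\langle A'_a\rangle\in\LU$ with $\langle A'_a\rangle\subseteq\bigcap\{\langle A_c\rangle:c\preceq a\}$ and diagonalize $(A'_a)$ instead --- the same trick used in Corollary \ref{cor:finite_p+_strong}. Without some such step, what you prove is strictly weaker than the stated corollary.
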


\begin{proof}
	Suppose we are given $\langle A_a\rangle\in\LU$ for all $a\in\FIN^{[<\infty]}$. Define
	\begin{align*}
		\mathbb{D}_0&=\{B\in\FIN^{[\infty]}: \text{ $B/a\preceq A_a$ for all $a\preceq B$}\}\\
		\mathbb{D}_1&=\{B\in\FIN^{[\infty]}: \text{ $\langle B\rangle$ and the $\langle A_a\rangle$'s do not generate a filter}\}.
	\end{align*}
	Let $\mathbb{D}=\mathbb{D}_0\cup\mathbb{D}_1$. It is straightforward to verify that $\mathbb{D}$ is analytic and dense open in $(\FIN^{[\infty]},\preceq)$. By Theorem \ref{thm:local_Milliken_Taylor}, there is a $\langle B\rangle\in\LU$ such that $B\in\mathbb{D}$. Clearly, $B\notin\mathbb{D}_1$, so $B\in\mathbb{D}_0$ and thus witnesses selectivity.
\end{proof}

Both of the previous corollaries are instances of complete combinatorics at work in the $\ZFC$ context.

Returning to the setting of an arbitrary countable field, we have seen that every $(p^+)$-filter produces a stable ordered-union ultrafilter. In Theorem \ref{thm:ordered_union_to_full}, we prove a converse. We'll need some notation: For $X=(x_n)\in{E^{[\infty]}}$, let $\supp(X)=(\supp(x_n))\in\FIN^{[\infty]}$. Part (a) of the following lemma implies that $\supp:{E^{[\infty]}}\to\FIN^{[\infty]}$ is a projection, in the sense of forcing.

\begin{lemma}\label{lem:supp_proj}
	\begin{enumerate}
		\item Suppose that $X\in{E^{[\infty]}}$ and $A\in\FIN^{[\infty]}$ are such that $A\preceq\supp(X)$. Then, there is a $Y\in{E^{[\infty]}}$ such that $Y\preceq X$ and $\supp(Y)=A$.
		\item Suppose that $(X_n)$ is a $\preceq^*$-decreasing sequence in ${E^{[\infty]}}$ and $A\in\FIN^{[\infty]}$ is such that $A\preceq^*\supp(X_n)$ for all $n$. Then, there is a $Y\in{E^{[\infty]}}$ such that $Y\preceq^* X_n$ for all $n$ and $\supp(Y)=A$.	
	\end{enumerate}
\end{lemma}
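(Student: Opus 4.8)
The plan is to build $Y$ entry by entry, exploiting the fact that for a block sequence the support of a sum of entries (which have pairwise disjoint supports) is exactly the union of those supports, with no cancellation; thus unions in $\FIN$ lift canonically to sums in $E$. For part (a), write $\supp(X)=(s_m)$ with $s_m=\supp(x_m)$, so the $s_m$ are pairwise disjoint and increasing. Since $A\preceq\supp(X)$, each entry $a_j$ lies in $\langle\supp(X)\rangle$, hence $a_j=\bigcup_{m\in I_j}s_m$ for a unique finite $I_j\subseteq\omega$ (uniqueness by disjointness). I would then set $y_j=\sum_{m\in I_j}x_m\in\langle X\rangle$; no cancellation gives $\supp(y_j)=\bigcup_{m\in I_j}s_m=a_j$, so $Y:=(y_j)$ is a block sequence with $\supp(Y)=A$ and $Y\preceq X$.

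For part (b) the same lifting idea applies, but for large $j$ I must choose a single vector of support $a_j$ lying simultaneously in $\langle X_n\rangle$ for all small $n$; the $\preceq^*$-decreasing hypothesis is exactly what makes this possible. First, for each $n\ge 1$ fix a threshold $m_n$ with $X_n/m_n\preceq X_{n-1}$, and set $M_\ell=\max(m_1,\ldots,m_\ell)$. The crucial step is a \emph{propagation claim}: if $v\in\langle X_\ell\rangle$ and $\min(\supp(v))>M_\ell$, then $v\in\langle X_k\rangle$ for all $k\le\ell$. Indeed, writing $v$ as a combination of entries of $X_\ell$, each contributing entry has support contained in $\supp(v)$, hence lying above $m_\ell$, hence lying in $\langle X_{\ell-1}\rangle$; iterating this argument down through $m_{\ell-1},\ldots,m_1$ yields the claim. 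Separately, since $A\preceq^*\supp(X_n)$, I fix $N_n$ so that $a_j\in\langle\supp(X_n)\rangle$ for all $j\ge N_n$.

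I would then assign to each $j$ a level $\ell(j)$, the largest $n\le j$ with $j\ge N_n$ and $\min(a_j)>M_n$ (if any such $n$ exists). Because $A$ is a block sequence, $\min(a_j)\to\infty$, so for each fixed $n$ both conditions hold for all large $j$; hence $\ell(j)\to\infty$. For $j$ where $\ell(j)$ is defined, let $y_j=\sum_{m\in I_{j,\ell(j)}}x^{(\ell(j))}_m$ be the natural vector of support $a_j$ in $\langle X_{\ell(j)}\rangle$ provided by (a), which by the propagation claim lies in $\langle X_k\rangle$ for every $k\le\ell(j)$; for the finitely many remaining $j$, take $y_j=\sum_{i\in a_j}e_i$. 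Throughout $\supp(y_j)=a_j$, so $Y:=(y_j)$ is a block sequence with $\supp(Y)=A$, and fixing $n$ and choosing $J$ with $\ell(j)\ge n$ for $j\ge J$, the tail of $Y$ beyond $\max(a_{J-1})$ lies in $\langle X_n\rangle$, giving $Y\preceq^* X_n$. The main obstacle is precisely this propagation step: one must ensure the vector realizing $a_j$ inside the $\ell(j)$-th space falls automatically into all coarser spaces, which forces the bookkeeping interleaving the two threshold families — the $N_n$ from $A\preceq^*\supp(X_n)$ and the $m_n$ from $X_n\preceq^* X_{n-1}$ — so that $\ell(j)$ still tends to infinity; everything else reduces to the ``no cancellation for disjoint supports'' observation from part (a).
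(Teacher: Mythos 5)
Your proof is correct and takes essentially the same route as the paper's: realize each block $a_j$ as a sum of entries of a sufficiently deep $X_n$ (using that pairwise disjoint supports preclude cancellation) and push membership down the $\preceq^*$-decreasing chain. The paper merely compresses the bookkeeping by choosing a single increasing threshold sequence $m_n$ satisfying both $A/m_n\preceq\supp(X_n)$ and $X_n/m_n\preceq X_{n-1}$, which plays the combined role of your $N_n$, $M_n$, and level function $\ell(j)$.
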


\begin{proof}
	Part (a) is easier, so we will just prove (b) here instead: Write $A=(a_k)$, with each $a_k\in\FIN$. For notational convenience, let $X_{-1}=(e_n)$ and $m_{-1}=-1$. For each $n\geq 0$, let $m_n$ be such that $A/m_n\preceq \supp(X_n)$ and $X_n/m_n\preceq X_{n-1}$. We may assume that each $m_n=\max(\supp(a_i))$ for some $i$ and that $m_n<m_{n+1}$. For each $n\geq -1$, and each of the finitely many $a_k$'s with $\supp(a_k)\subseteq(m_{n},m_{n+1}]$, choose $y_k\in\langle X_n\rangle$ such that $\supp(a_k)=y_k$. Let $Y=(y_k)$. Clearly, $\supp(Y)=A$. Moreover, for each $n$ and all $k$ with $\supp(y_k)\geq m_n$, $y_k\in\langle X_n/m_n\rangle$, and so $Y\preceq^*X_n$.
\end{proof}

\begin{lemma}\label{lem:U_dense_FIN}
	Let $\LF$ be a block filter on $E$ such that $\supp(\LF)$ is a stable ordered-union ultrafilter. If $D\subseteq E$ is $\LF$-dense and $\langle Y\rangle\in\LF$, then there is a $Z\preceq Y$ such that $\langle Z\rangle\subseteq D$ and $\supp(Z)\in\supp(\LF)$.
\end{lemma}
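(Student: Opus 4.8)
The plan is to obtain $Z$ from a single application of the Milliken--Taylor localization (Theorem~\ref{thm:local_Milliken_Taylor}), which is available precisely because $\supp(\LF)$ is stable, applied to the analytic set of ``supports realizable inside $D$''. Concretely, I would set
\[
	\A=\{B\in\FIN^{[\infty]}: B\preceq\supp(Y)\text{ and }\exists Z\in{E^{[\infty]}}\,(Z\preceq Y,\ \supp(Z)=B,\ \langle Z\rangle\subseteq D)\}.
\]
Since $E$ is discrete, membership in the arbitrary set $D$ is clopen, so ``$\langle Z\rangle\subseteq D$'' is a closed condition on $Z$ (a countable intersection, over finite linear combinations of an initial segment, of clopen conditions), and likewise ``$Z\preceq Y$'' and ``$\supp(Z)=B$'' are closed. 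Hence $\A$ is the projection to the first coordinate of a closed subset of $\FIN^{[\infty]}\times{E^{[\infty]}}$, and is therefore analytic.

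Next I would apply Theorem~\ref{thm:local_Milliken_Taylor}(ii) to $\A$, obtaining $\langle A_0\rangle\in\supp(\LF)$ homogeneous for $\A$ on $A_0^{[\infty]}$. Using that $\langle\supp(Y)\rangle\in\supp(\LF)$ (the support set of $\langle Y\rangle$ is exactly $\langle\supp(Y)\rangle$) together with the fact that $\supp(\LF)$ is an ordered-union filter, I refine to $\langle A\rangle\in\supp(\LF)$ with $A\preceq A_0$ and $A\preceq\supp(Y)$; since $A^{[\infty]}\subseteq A_0^{[\infty]}$, homogeneity passes to $A$. If $A^{[\infty]}\subseteq\A$, then $A\in\A$ (as $A\preceq A$), so by definition there is $Z\preceq Y$ with $\supp(Z)=A$ and $\langle Z\rangle\subseteq D$; then $\langle\supp(Z)\rangle=\langle A\rangle\in\supp(\LF)$, which is exactly the conclusion of the lemma.

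It remains to rule out the alternative $A^{[\infty]}\cap\A=\emptyset$, and this is where $\LF$-density and the projection behavior of $\supp$ do the work. Since $\langle A\rangle\in\supp(\LF)$, by the definition of $\supp(\LF)$ there is $\langle X_0\rangle\in\LF$ with $\supp(X_0)\preceq A$; intersecting with $\langle Y\rangle$ using downward directedness of $\LF$ gives $\langle X_1\rangle\in\LF$ with $X_1\preceq X_0,Y$, so $\supp(X_1)\preceq A$. Applying $\LF$-density to $\langle X_1\rangle$ yields an infinite-dimensional $V\subseteq\langle X_1\rangle$ with $V\subseteq D$, and thinning a block sequence out of $V$ produces $Z_0\preceq X_1$ with $\langle Z_0\rangle\subseteq D$. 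Then $Z_0\preceq X_1\preceq Y$, and setting $B=\supp(Z_0)$ we have $B\preceq\supp(X_1)\preceq A$, so $B\in A^{[\infty]}$ while $Z_0$ witnesses $B\in\A$ --- contradicting emptiness. Hence the first alternative holds and the lemma follows.

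The one genuinely delicate point --- and the reason a direct diagonalization is awkward --- is that for $|F|>2$ a support sequence $B\in\FIN^{[\infty]}$ is realized by many distinct vector block sequences $Z$ with $\supp(Z)=B$, only some of which span into $D$. Concealing the choice of $Z$ behind an existential quantifier keeps $\A$ analytic and lets the Milliken--Taylor dichotomy select a support that is \emph{simultaneously} in the ultrafilter and realizable in $D$; the projection Lemma~\ref{lem:supp_proj} is what guarantees such realizations exist in principle, but it is the density hypothesis that forces the nonempty side of the dichotomy.
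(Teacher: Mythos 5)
Your proposal is correct and takes essentially the same route as the paper's own proof: the same analytic set of supports realizable inside $D$ below $Y$, the same application of Theorem~\ref{thm:local_Milliken_Taylor} to $\supp(\LF)$, and the same use of $\LF$-density (via the downward directedness of the block filter) to derive a contradiction from the empty side of the dichotomy. The only differences are cosmetic: the clause $B\preceq\supp(Y)$ in your definition of $\A$ and the refinement of $A_0$ to $A\preceq A_0,\supp(Y)$ are both redundant, since any witness $Z\preceq Y$ automatically has $\supp(Z)\preceq\supp(Y)$ and your contradiction argument only uses $\langle A\rangle\in\supp(\LF)$, which $A_0$ already satisfies.
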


\begin{proof}
	Let
	\[
		\D=\{A\in\FIN^{[\infty]}:\exists Z\in{E^{[\infty]}}(Z\preceq Y \land \supp(Z)=A \land \langle Z\rangle\subseteq D)\},
	\]	
	an analytic subset of $\FIN^{[\infty]}$. By Theorem \ref{thm:local_Milliken_Taylor}, there is a $\langle B\rangle\in\supp(\LF)$ with $B\in\FIN^{[\infty]}$ such that either $B^{[\infty]}\subseteq\D$ or $B^{[\infty]}\cap\D=\emptyset$. We claim the latter cannot happen: As $\langle B\rangle\in\supp(\LF)$, there is a $\langle Y'\rangle\in\LF$ such that $\supp(Y')\preceq B$. Since $\LF$ is a block filter, we may further assume that $Y'\preceq Y$. As $D$ is $\LF$-dense, there is a $V\preceq Y'$ such that $\langle V\rangle\subseteq D$, and so $\supp(V)\in B^{[\infty]}\cap\D$. Thus, $B^{[\infty]}\subseteq\D$, and in particular, $B\in\D$. Any $Z\in{E^{[\infty]}}$ which witnesses $B\in\D$ will satisfy the desired conclusion.
\end{proof}

\begin{thm}\label{thm:ordered_union_to_full}
	$(\CH)$\footnote{$\CH$ is only used here in so far as it allows us to avoid diagonalizing uncountable-length sequences in $\LU$. If, instead, $\MA$ holds and $\LU$ was closed under diagonalizations of length $<2^{\aleph_0}$ (such stable ordered-union ultrafilters can be easily constructed using $\MA$ and Lemma 5 of \cite{MR2145797}), then our proof would go through \textit{mutatis mutandis}.} If $\LU$ is a stable ordered-union ultrafilter on $\FIN$, then there is a $(p^+)$-filter $\LF$ on $E$ such that $\supp(\LF)=\LU$.
\end{thm}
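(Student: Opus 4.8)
The plan is to build, by transfinite recursion of length $2^{\aleph_0}$ (which equals $\omega_1$ under $\CH$), a $\preceq^*$-decreasing sequence $(Y_\alpha)_{\alpha<\omega_1}$ in ${E^{[\infty]}}$ with $\langle\supp(Y_\alpha)\rangle\in\LU$ for every $\alpha$, and to let $\LF$ be the block filter with base $\{\langle Y_\alpha\rangle:\alpha<\omega_1\}$. Two of the three requirements are then essentially free. For the $(p)$-property: given $\langle X_n\rangle\in\LF$, each $X_n$ satisfies $Y_{\alpha_n}\preceq^* X_n$ for some $\alpha_n$, and since $\omega_1$ has uncountable cofinality, $\alpha=\sup_n\alpha_n<\omega_1$; then $Y_\alpha\preceq^* Y_{\alpha_n}\preceq^* X_n$ for all $n$, so $\langle Y_\alpha\rangle\in\LF$ is the desired diagonalization. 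For $\supp(\LF)=\LU$: the invariant $\langle\supp(Y_\alpha)\rangle\in\LU$ forces $\supp(\LF)\subseteq\LU$ (any $\langle X\rangle\in\LF$ has $Y_\alpha\preceq^* X$, hence $\langle\supp(Y_\alpha)\rangle\subseteq^*\langle\supp(X)\rangle$, so $\langle\supp(X)\rangle\in\LU$); once $\LF$ is shown full, $\supp(\LF)$ is itself an ultrafilter by Theorem \ref{thm:supp_ordered_union}, and an ultrafilter contained in the ultrafilter $\LU$ must equal it. Thus the entire content is to arrange fullness while maintaining the two invariants, and for this I enumerate $\mathcal P(E)=\{D_\alpha:\alpha<\omega_1\}$ and devote stage $\alpha$ to the candidate dense set $D_\alpha$.

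At stage $\alpha$ I first produce a common $\preceq^*$-lower bound $Y^*$ for $(Y_\beta)_{\beta<\alpha}$ with $\langle\supp(Y^*)\rangle\in\LU$: at successor stages $Y^*$ is the previous term, and at limit stages (where $\alpha$ is countable) I apply stability of $\LU$ to the countably many $\langle\supp(Y_\beta)\rangle$ to get $\langle B\rangle\in\LU$ with $B\preceq^*\supp(Y_\beta)$ for all $\beta$, and then Lemma \ref{lem:supp_proj}(b) lifts $B$ to the required $Y^*$ with $\supp(Y^*)=B$. To handle $D_\alpha$, I consider the analytic set
\[
	\D=\{A\in\FIN^{[\infty]}:\exists Z\preceq Y^*\,(\supp(Z)=A\land\langle Z\rangle\subseteq D_\alpha)\}
\]
and apply the Milliken--Taylor dichotomy (Theorem \ref{thm:local_Milliken_Taylor}), arranging via directedness of $\LU$ a homogeneous $\langle B\rangle\in\LU$ with $B\preceq\supp(Y^*)$. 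If $B^{[\infty]}\subseteq\D$, then $B\in\D$ yields $Z\preceq Y^*$ with $\supp(Z)=B$ and $\langle Z\rangle\subseteq D_\alpha$; I set $Y_\alpha=Z$, so that $D_\alpha\supseteq\langle Y_\alpha\rangle\in\LF$ places $D_\alpha$ in $\LF$. If instead $B^{[\infty]}\cap\D=\emptyset$, I use Lemma \ref{lem:supp_proj}(a) to lift $B$ to $W\preceq Y^*$ with $\supp(W)=B$ and set $Y_\alpha=W$. Either way the invariants $Y_\alpha\preceq^* Y_\beta$ and $\langle\supp(Y_\alpha)\rangle=\langle B\rangle\in\LU$ are preserved.

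The crux is the verification that the second case genuinely defeats density, and this is exactly where the scalars of a field with $|F|>2$ could cause trouble: a single $A\in\FIN^{[\infty]}$ is the support of many different block sequences, so the $\FIN$-level dichotomy on $\D$ need not visibly control all of $D_\alpha$. The key observation that saves the argument is that the entries of a block sequence have pairwise disjoint supports, so there is no cancellation: if $V'\preceq W$ is a block sequence then $\supp(V')\preceq\supp(W)$. Hence, were $D_\alpha$ dense below $\langle W\rangle$, I could take an infinite-dimensional $V\subseteq\langle W\rangle$ with $V\subseteq D_\alpha$, refine it to a block sequence $V'\preceq W$ with $\langle V'\rangle\subseteq D_\alpha$, and conclude $\supp(V')\in B^{[\infty]}\cap\D$, contradicting the case hypothesis; thus $D_\alpha$ is not dense below $\langle W\rangle=\langle Y_\alpha\rangle\in\LF$, so it is not $\LF$-dense. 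Consequently, if the final $D_\alpha$ is $\LF$-dense the construction must have taken the first branch, putting $D_\alpha\in\LF$; this is fullness. I expect the main obstacle to be precisely this reconciliation of the field's scalars with a Ramsey dichotomy stated only for $\FIN$, together with the bookkeeping that keeps $(Y_\alpha)$ $\preceq^*$-decreasing with supports in $\LU$ through limit stages; both are handled by Lemma \ref{lem:supp_proj} and the disjoint-support remark above, and in effect the argument re-proves the mechanism behind Lemma \ref{lem:U_dense_FIN}.
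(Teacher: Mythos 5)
Your proof is correct and follows essentially the same route as the paper's: a transfinite recursion of length $\omega_1$ under $\CH$, using stability of $\LU$ together with Lemma \ref{lem:supp_proj} to diagonalize at limit stages, and the Milliken--Taylor dichotomy (Theorem \ref{thm:local_Milliken_Taylor}) applied to the analytic set of supports of block sequences inside $D_\alpha$ to secure fullness --- which is exactly the mechanism of Lemma \ref{lem:U_dense_FIN}, inlined into the construction as you note. The only cosmetic differences are that the paper interleaves an enumeration of the base of $\LU$ to force $\supp(\LF)\supseteq\LU$ directly, whereas you derive $\supp(\LF)=\LU$ from the invariant $\supp(\LF)\subseteq\LU$ together with fullness, Theorem \ref{thm:supp_ordered_union}, and maximality of ultrafilters; both are valid.
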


\begin{proof}
	Using $\CH$, we can enumerate all subsets of $E$ as $D_\xi$, and all elements $A\in\FIN^{[\infty]}$ such that $\langle A\rangle\in\LU$ as $A_\eta$, for $\xi,\eta<\aleph_1$. We will construct, via transfinite recursion, a $\preceq^*$-decreasing sequence $(X_\alpha)_{\alpha<\aleph_1}$ in ${E^{[\infty]}}$ that will generate the promised $(p^+)$-filter $\LF$.
	
	$\alpha=0$: Choose $X_0'\in{E^{[\infty]}}$ such that $\supp(X_0')=A_0$. If $D_0$ is such that there is some $Y\preceq X_0'$ with $\langle Y\rangle\subseteq D_0$ and $\langle\supp(Y)\rangle\in\LU$, then choose $X_0$ to be such a $Y$. If not, take $X_0=X_0'$.
	
	$\alpha=\beta+1$: Suppose we have defined $X_\gamma$ for $\gamma\leq\beta$ such that $\langle\supp(X_\gamma)\rangle\in\LU$. There is some $\langle B\rangle\in\LU$ such that $\langle B\rangle\subseteq \langle\supp(X_\beta)\rangle\cap\langle A_{\beta+1}\rangle$. Apply Lemma \ref{lem:supp_proj}(a) to obtain an $X_{\beta+1}'\in{E^{[\infty]}}$ such that $X_{\beta+1}'\preceq X_\beta$ and $\supp(X_{\beta+1}')=B$. If $D_{\beta+1}$ is such that there is some $Y\preceq X_{\beta+1}'$ with $\langle Y\rangle\subseteq D_{\beta+1}$ and $\langle\supp(Y)\rangle\in\LU$, then choose $X_{\beta+1}$ to be such a $Y$. If not, take $X_{\beta+1}=X_{\beta+1}'$.
	
	$\alpha=\beta$ for limit $\beta$: Suppose we have defined $X_\gamma$ for $\gamma<\beta$ such that $\langle\supp(X_\gamma)\rangle\in\LU$. Let $(\gamma_n)$ be a strictly increasing cofinal sequence in $\beta$. Since $\LU$ is stable, there is some $A\in\FIN^{[\infty]}$ such that $\langle A\rangle\in\LU$ and $A\preceq^* \supp(X_{\gamma_n})$ for all $n$. We may, moreover, assume that $A\preceq A_\beta$. By Lemma \ref{lem:supp_proj}(b), there is an $X_\beta'\in{E^{[\infty]}}$ such that $X_\beta'\preceq^* X_n$ for all $n$ and $\supp(X_\beta')=A$. If $D_{\beta}$ is such that there is some $Y\preceq X_\beta'$ with $\langle Y\rangle\subseteq D_{\beta}$ and $\langle\supp(Y)\rangle\in\LU$, then choose $X_{\beta}$ to be such a $Y$. If not, take $X_{\beta}=X_\beta'$. This completes the construction.
	
	Let $\LF$ be the block filter on $E$ generated by the $X_\alpha$'s. Our construction has ensured that $\LF$ has the $(p)$-property and that $\supp(\LF)\supseteq\LU$, and hence $\supp(\LF)=\LU$, since $\LU$ is an ultrafilter. It remains to verify that $\LF$ is full. Suppose that $D=D_\xi$ is $\LF$-dense. By Lemma \ref{lem:U_dense_FIN} applied to the $X_\xi'$ (for which we've ensured $\langle X_\xi'\rangle\in\LF$) found in stage $\xi$ of the above construction, there must be some $Y\preceq X_\xi'$ such that $\langle Y\rangle\subseteq D$ and $\langle\supp(Y)\rangle\in\LU$, meaning that $X_\xi$ was chosen so that $\langle X_\xi\rangle\subseteq D$. Thus, $\LF$ is a $(p^+)$-filter.
\end{proof}

It was shown in \cite{MR891244} that if $\LU$ is an ordered-union ultrafilter, then
\begin{align*}
	\min(\LU)&=\{\{\min(\supp(a)):a\in A\}:A\in\LU\}\\
	\max(\LU)&=\{\{\max(\supp(a)):a\in A\}:A\in\LU\}
\end{align*}
are nonisomorphic selective ultrafilters on $\omega$, and conversely, if $\LV_0$ and $\LV_1$ are nonisomorphic selective ultrafilters, then (assuming $\CH$) there is a stable ordered-union ultrafilter $\LU$ on $\FIN$ such that $\min(\LU)=\LV_0$ and $\max(\LU)=\LV_1$. This can now be combined with the previous theorem to get a similar conclusion for $(p^+)$-filters on $E$.

\section{Fullness and maximality}

A full block filter $\LF$ on $E$ is always maximal amongst block filters, and in fact is maximal with respect to all filters generated by infinite-dimensional subspaces of $E$. That is, for any infinite-dimensional subspace $V$ of $E$, if $V\cap X\neq\{0\}$ for all $X\in\LF$, then $V\in\LF$ (to see this, just let $D=V$ in the definition of ``full''). Filters of subspaces with the latter property were studied by Bergman and Hrushovski in \cite{MR1661256}, where they were called \emph{linear ultrafilters}; we will instead call them \emph{subspace maximal}. 

\begin{prop}
	Let $\LF$ be a filter generated by infinite-dimensional subspaces of $E$. The following are equivalent:
	\begin{enumerate}[label=\textup{(\roman*)}]
		\item $\LF$ is subspace maximal.
		\item For every subspace $V\subseteq E$, either $V\in\LF$ or there is some direct complement $V'$ of $V$ (i.e., $V\cap V'=\{0\}$ and $V\oplus V'=E$) such that $V'\in\LF$.
		\item For every linear transformation $T$ on $E$ (to any $F$-vector space), either $\ker(T)\in\LF$ or there is a subspace $X\in\LF$ such that $T\restr X$ is injective.
	\end{enumerate}
\end{prop}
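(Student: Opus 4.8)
The three conditions are three faces of the same maximality, so the plan is to run a single implication cycle, $\mathrm{(i)}\Rightarrow\mathrm{(ii)}\Rightarrow\mathrm{(iii)}\Rightarrow\mathrm{(i)}$, built around one recurring linear-algebra move. That move is: if $X\in\LF$ satisfies $X\cap V=\{0\}$ for some subspace $V$, then $X$ extends to a \emph{direct} complement of $V$ that still lies in $\LF$. Indeed, choose a complement $U$ of $V\oplus X$ in $E$ and set $V'=X\oplus U$; a quick check gives $V\cap V'=\{0\}$ and $V\oplus V'=E$, while $X\subseteq V'$ forces $V'\in\LF$ by upward closure of the filter. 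I will also use the elementary dictionary that, for a linear map $T$ with $\ker(T)=V$, the restriction $T\restr X$ is injective precisely when $X\cap V=\{0\}$, and that every subspace $V$ is realized as a kernel by the quotient map $E\to E/V$.

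For $\mathrm{(i)}\Rightarrow\mathrm{(ii)}$, fix a subspace $V$. If $V$ is finite-dimensional, any algebraic complement $V'$ has finite codimension and hence lies in $\LF$, so the second alternative holds. If $V$ is infinite-dimensional and $V\notin\LF$, then subspace maximality (in contrapositive form) yields $X\in\LF$ with $V\cap X=\{0\}$, and the recurring move produces the desired complement $V'\in\LF$.

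For $\mathrm{(ii)}\Rightarrow\mathrm{(iii)}$, given $T$ set $V=\ker(T)$ and apply (ii): either $\ker(T)\in\LF$, or there is a direct complement $V'\in\LF$ of $V$, and then $V'\cap\ker(T)=\{0\}$ makes $T\restr V'$ injective, so $X=V'$ works. For $\mathrm{(iii)}\Rightarrow\mathrm{(i)}$, let $V$ be infinite-dimensional meeting every member of $\LF$ nontrivially, and suppose toward a contradiction that $V\notin\LF$. Apply (iii) to the quotient map $T\colon E\to E/V$, whose kernel is $V\notin\LF$; the second alternative then gives $X\in\LF$ with $T\restr X$ injective, i.e.\ $V\cap X=\{0\}$, contradicting that $V$ meets $X$. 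Hence $V\in\LF$.

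There is no serious obstacle here; the content is entirely in the bookkeeping of the complement construction. The one point that needs care is the finite-dimensional case of $\mathrm{(i)}\Rightarrow\mathrm{(ii)}$, since subspace maximality as stated only speaks about infinite-dimensional $V$ — this is exactly where the standing assumption that $\LF$ contains all finite-codimension subspaces is used. The other small thing to verify is that the extension $V'=X\oplus U$ genuinely remains a direct complement, but this is immediate from $V\oplus X\oplus U=E$.
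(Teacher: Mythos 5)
Your proof is correct, but it takes a genuinely different route from the paper's. The paper does not prove (i) $\Leftrightarrow$ (ii) at all: it cites that equivalence from Lemma 3 of \cite{MR1661256}, and then proves (i) $\Rightarrow$ (iii) and (iii) $\Rightarrow$ (i) directly --- the former by applying subspace maximality to $\ker(T)$, the latter by realizing the given subspace $Y$ as the kernel of the projection $T(y+y')=y'$ coming from a decomposition $E=Y\oplus Y'$. You instead run a self-contained cycle (i) $\Rightarrow$ (ii) $\Rightarrow$ (iii) $\Rightarrow$ (i), in effect reproving the cited lemma via your ``extend a transversal $X\in\LF$ to a direct complement $V'=X\oplus U$'' move, and you replace the paper's explicit projection by the quotient map $E\to E/V$ (the same underlying idea: realize $V$ as a kernel). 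Your route buys self-containedness and a cleaner handling of the dimension edge case: by splitting off finite-dimensional $V$ in (i) $\Rightarrow$ (ii), where the standing assumption that $\LF$ contains all finite-codimensional subspaces does the work, you avoid a point the paper silently glosses in its (i $\Rightarrow$ iii) argument, namely that subspace maximality only speaks of \emph{infinite-dimensional} subspaces, so one must separately rule out a finite-dimensional kernel meeting every element of $\LF$. The paper's route buys brevity. One detail worth making explicit in your write-up: elements of $\LF$ are merely subsets of $E\setminus\{0\}$, so when the contrapositive of maximality hands you $X\in\LF$ disjoint from $V$, you should first shrink $X$ to a basic element (an infinite-dimensional subspace lying in $\LF$) before forming $V\oplus X$; the paper commits the same harmless abuse.
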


\begin{proof}
	The equivalence if (i) and (ii) is part of Lemma 3 of \cite{MR1661256}. 
	
	(i $\Rightarrow$ iii) Given $T$, suppose that $T$ is not injective on any subspace $Y\in\LF$. This means that $\ker(T)$ has nontrivial intersection with every such $Y$. Hence, by subspace maximality, $\ker(T)\in\LF$.
	
	(iii $\Rightarrow$ i) Let $Y$ be an infinite-dimensional subspace of $E$ which has nontrivial intersection with every subspace $X\in\LF$. Let $Y'$ be a direct complement to $Y$ in $E$. Take $T:E\to E$ to be the unique linear transformation determined by 
	\[
		T(y+y')=y'
	\]
	for $y\in Y$ and $y'\in Y'$. So, $\ker(T)=Y$. If there was a subspace $X\in\LF$ such that $T\restr X$ was injective, then by assumption, $X\cap Y$ is nontrivial and $T\restr X\cap Y$ is injective, a contradiction. Thus, $Y=\ker(T)\in\LF$.
\end{proof}

We mention here a result from \cite{MR1661256} about the relationship between selective ultrafilters on $\omega$ and filters of subspaces of $E$: Proposition 18 in \cite{MR1661256} says that, given an ultrafilter $\LU$ on $\omega$, the set $\{\langle (e_i)_{i\in A}\rangle:A\in\LU\}$, together with the finite-codimensional subspaces of $E$, generates (via finite intersections and supersets) a subspace maximal filter on $E$ if and only if $\LU$ is selective. However, it is not clear if the resulting filter on $E$ can be a block filter. Moreover, as it is consistent with $\ZFC$ that there is a unique (up to isomorphism) selective ultrafilter, and hence no ordered-union ultrafilters (cf.~VI.5 in \cite{MR1623206} and the comments at the end of the previous section), one cannot obtain a full block filter on $E$ from a selective ultrafilter alone.


In contrast to the above forms of maximality, unless $|F|=2$, a block filter on $E$ is \emph{never} an ultrafilter (of sub\textit{sets}). This is a consequence of the existence of asymptotic pairs:

\begin{defn}
	\begin{enumerate}
		\item A set $A\subseteq E\setminus\{0\}$ is \emph{asymptotic} if for every infinite-dimensional subspace $V$ of $E$, $V\cap A\neq\emptyset$.
		\item An \emph{asymptotic pair} is a pair of disjoint asymptotic sets.	
	\end{enumerate}
\end{defn}

A standard construction of an asymptotic pair uses the \emph{oscillation} of a nonzero vector $v=\sum a_ne_n$, defined by
	\[
		\osc(v)=|\{i\in\supp(v):a_i\neq a_{i+1}\}|.
	\]	
	It is shown in the proof of Theorem 7 in \cite{MR2737185} that if $|F|>2$, then on any infinite-dimensional subspace of $E$, the range of $\osc$ contains arbitrarily long intervals (i.e., is a \emph{thick} set), and thus the sets
	\begin{align*}
		A_0 &= \{v\in E\setminus\{0\}:\osc(v)\text{ is even}\}\\
		A_1 &= \{v\in E\setminus\{0\}:\osc(v)\text{ is odd}\}
	\end{align*}
	form an asymptotic pair. Note that $\osc$, and thus the $A_i$, are invariant under multiplication by nonzero scalars.

Given a block filter $\LF$, a set $D\subseteq E$ is $\LF$-dense if and only if $A=E\setminus D$ fails to be asymptotic below every $\langle X\rangle\in\LF$. This immediately implies the following alternate characterization of fullness:

\begin{prop}
	A block filter $\LF$ on $E$ is full if and only if for every $A\subseteq E\setminus\{0\}$, there is an $\langle X\rangle\in\LF$ such that either $\langle X\rangle\cap A=\emptyset$ or $A$ is asymptotic below $\langle X\rangle$. \qed
\end{prop}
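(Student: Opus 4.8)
The plan is to read this statement as a direct reformulation of fullness through the complementation observation recorded immediately above the proposition: for a block filter $\LF$ and a set $D\subseteq E$, $D$ is $\LF$-dense if and only if $A=E\setminus D$ fails to be asymptotic below every $\langle X\rangle\in\LF$. Equivalently, for an infinite-dimensional subspace $V$ the inclusion $V\subseteq D$ is the same as $V\cap A=\emptyset$, so that ``$A$ is asymptotic below $\langle X\rangle$'' says precisely that no infinite-dimensional $V\subseteq\langle X\rangle$ lies inside $D$. With this dictionary in hand, both implications become short. The only bookkeeping point to keep in mind is that $\langle X\rangle$ denotes the span with $0$ removed and that $A\subseteq E\setminus\{0\}$, so all complements should be read within $E\setminus\{0\}$; this never affects the argument.

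For the forward direction, I would assume $\LF$ is full and fix an arbitrary $A\subseteq E\setminus\{0\}$, splitting on whether $A$ is asymptotic below some member of $\LF$. If it is, that member is the desired $\langle X\rangle$ and the second alternative of the condition holds. If it is not, i.e.\ if $A$ fails to be asymptotic below every $\langle X\rangle\in\LF$, then by the observation above $D=E\setminus A$ is $\LF$-dense, so fullness gives $D\in\LF$. Since $\LF$ is a block filter it has a base of sets of the form $\langle X\rangle$, so there is some $\langle X\rangle\in\LF$ with $\langle X\rangle\subseteq D$, whence $\langle X\rangle\cap A=\emptyset$. Either way the required $\langle X\rangle$ exists.

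For the reverse direction, I would assume the stated condition and let $D\subseteq E$ be $\LF$-dense, aiming to show $D\in\LF$. Put $A=E\setminus D$. By the observation, $\LF$-density of $D$ says exactly that $A$ fails to be asymptotic below every $\langle X\rangle\in\LF$. Applying the condition to this $A$ produces some $\langle X\rangle\in\LF$ for which one of the two alternatives holds; the ``asymptotic below $\langle X\rangle$'' alternative is impossible for this particular $\langle X\rangle\in\LF$, so we must have $\langle X\rangle\cap A=\emptyset$, that is $\langle X\rangle\subseteq D$. As $\LF$ is upward closed, $D\in\LF$, establishing fullness.

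I do not anticipate a genuine obstacle here: the entire content sits in the equivalence between $\LF$-density of $D$ and non-asymptoticity of its complement, which is already in hand, together with the fact that membership in a block filter is witnessed by a base element $\langle X\rangle$. The proof is a symmetric pair of translations, and the statement is in effect just the contrapositive repackaging of fullness phrased in terms of asymptotic sets rather than dense sets.
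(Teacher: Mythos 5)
Your proof is correct and matches the paper's intended argument: the paper marks this proposition \qed{} precisely because it follows immediately from the observation (stated just before it) that $D$ is $\LF$-dense if and only if $E\setminus D$ fails to be asymptotic below every $\langle X\rangle\in\LF$, which is exactly the dictionary you use. Your write-up simply makes explicit the two translations (plus the base property and upward closure of the block filter) that the paper leaves to the reader.
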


%

\section{The $(p)$-property and its relatives}

We begin this section by showing that if a block filter witnesses the local form of Rosendal's dichotomy, then it must have the $(p)$-property.

\begin{thm}\label{thm:local_Rosendal_imp_p}
	Let $\LF$ be a block filter on $E$. If all clopen subsets of ${E^{[\infty]}}$ satisfy the conclusion of Theorem \ref{thm:local_Rosendal_imp_p}, then $\LF$ has the $(p)$-property.
\end{thm}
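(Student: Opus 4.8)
The plan is to deduce the $(p)$-property from the dichotomy of Theorem~\ref{thm:local_Rosendal} applied to a single, carefully chosen clopen set. Suppose $\langle X_n\rangle\in\LF$ for all $n$. First I would reduce to the case that $(X_n)$ is $\preceq$-decreasing: using only that $\LF$ is a (downward directed) block filter, recursively choose $\langle W_n\rangle\in\LF$ with $W_0\preceq X_0$ and $W_{n+1}\preceq W_n$, $W_{n+1}\preceq X_{n+1}$. Since $W_n\preceq X_n$, any $\preceq^*$-diagonalization of $(W_n)$ diagonalizes $(X_n)$, so I may assume $X_{n+1}\preceq X_n$, and hence $\langle X_{n+1}\rangle\subseteq\langle X_n\rangle$, throughout.

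The conceptual point is that, since $\LF$ need not be strategic, I cannot hope to recover the diagonalization as an \emph{outcome} of a strategy for II; instead it must be the space $X$ that the dichotomy itself hands me. Accordingly I aim to land in alternative (i). Consider
\[
	\A=\{Z=(z_k)\in{E^{[\infty]}}:z_k\notin\langle X_k\rangle\text{ for some }k\leq\min(\supp(z_0))\}.
\]
Because the number of coordinates examined is bounded by $\min(\supp(z_0))$, which is determined by $z_0$ alone, both $\A$ and its complement are finitely determined along every branch, so $\A$ is clopen. I then apply Theorem~\ref{thm:local_Rosendal} to obtain $\langle X\rangle\in\LF$ satisfying (i) or (ii).

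I would rule out (ii) using directedness. If II had a strategy in $G[X]$ for playing into $\A$, let I instead play $Y_k=W_k$, where $\langle W_k\rangle\in\LF$ is a common $\preceq$-refinement of $X$ and $X_k$ (available since $\LF$ is a filter). Then every response satisfies $y_k\in\langle W_k\rangle\subseteq\langle X_k\rangle$, so no outcome can lie in $\A$, a contradiction. Hence (i) holds: I has a strategy $\sigma$ in $F[X]$ for playing out of $\A$.

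Finally I would read the diagonalization off of $\sigma$. Fix $k$ and suppose toward a contradiction that $X\not\preceq^* X_k$, i.e.\ there are block vectors of $X$ outside $\langle X_k\rangle$ with arbitrarily large support. Have II open by playing some $y_0\in\langle X\rangle$ with $\min(\supp(y_0))$ larger than both $k$ and $\sigma$'s first threshold (so that round $k$ lies within the examined range), play out rounds $1,\dots,k-1$ legally, and then at round $k$ answer $\sigma$'s committed threshold with a block vector $y_k\in\langle X\rangle$ of sufficiently large support lying outside $\langle X_k\rangle$; the resulting outcome lies in $\A$, contradicting the choice of $\sigma$. Thus $X\preceq^* X_k$ for every $k$, so $X$ is a diagonalization of $(X_n)$ with $\langle X\rangle\in\LF$, as required. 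The main obstacle is precisely the design of $\A$: the $(p)$-property is a tail condition, which a clopen (finitely determined) set cannot express directly, so the work lies in using the bound $\min(\supp(z_0))$ to keep $\A$ clopen while still forcing—via II's freedom to inflate $\min(\supp(y_0))$ and thereby activate arbitrarily late rounds—the entire family of conditions $X\preceq^* X_k$ at once. Verifying that these two competing requirements are compatible, together with the legality of the probing plays, is where the care is needed.
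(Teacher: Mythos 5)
Your proposal is correct and takes essentially the same route as the paper: a clopen set whose diagonalization conditions are activated by the support of II's first vector, the assumed dichotomy applied to that clopen set (note that what you are entitled to invoke is the hypothesis of the theorem --- the dichotomy conclusion for clopen sets --- rather than Theorem~\ref{thm:local_Rosendal} itself, whose $(p^+)$ assumption would be circular), directedness of $\LF$ to rule out the Gowers-game alternative, and player I's strategy in $F[X]$ to obtain the diagonalization. The remaining differences are cosmetic: the paper's clopen set constrains only the second coordinate, requiring $x_1\in\bigcap_{m\leq\max(\supp(x_0))}\langle X_m\rangle$, and reads the relations $Y/\sigma(y)\preceq X_m$ for the tail $Y=X/\sigma(\emptyset)$ directly off I's responses, whereas yours constrains coordinates $k\leq\min(\supp(z_0))$ and concludes $X\preceq^* X_k$ by contradiction.
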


\begin{proof}
	Let $\langle X_n\rangle\in\LF$ for each $n$. Define
	\[
		\A=\{(x_n)\in{E^{[\infty]}}:\text{ if $m\leq\max(\supp(x_0))$, then $x_1\in X_m$}\}.
	\]
	Clearly, $\A$ is a clopen subset of ${E^{[\infty]}}$. By our assumption, applied to $\A^c$, there is an $\langle X\rangle\in\LF$ such that either (i) I has a strategy in $F[X]$ for playing into $\A$ or (ii) II has a strategy in $G[X]$ for playing into $\A^c$.
	
	We claim that (ii) cannot happen. Suppose otherwise, denote II's strategy by $\alpha$, and consider the following round of $G[X]$: In the first inning, I plays $X$ and II responds with $\alpha(X)$. In the second inning, I plays some $Y\preceq X$ such that $\langle Y\rangle\subseteq\bigcap_{m\leq\max(\supp(\alpha(X)))}\langle X_m\rangle$,\footnote{Note that here, we could take $\langle Y\rangle\in\LF$. This shows that block filters witnessing Theorem \ref{thm:restr_local_Rosendal} below, while not necessarily full, must still have the $(p)$-property.} which defeats any possible next move by II, contrary to what we know about $\alpha$.
	
	Thus, (i) holds. Denote by $\sigma$ the resulting strategy for I. Let $Y=X/\sigma(\emptyset)$, so $\langle Y\rangle\in\LF$. Let $m$ be given. In the first inning of $F[X]$, let I play $\sigma(\emptyset)$, and let II play any $y\in \langle Y\rangle$ such that $m\leq\max(\supp(y))$. In the second inning, I plays $\sigma(y)$, which ensures that for any $z\in\langle Y/\sigma(y)\rangle$, $z\in\langle X_m\rangle$. In other words, $Y/\sigma(y)\preceq X_m$. Since $m$ was arbitrary, this shows that $Y\preceq^* X_m$ for all $m$, verifying the $(p)$-property.
\end{proof}

Next, we show that the $(p)$-property implies something which resembles the strong $(p)$-property, except that the family of elements of the filter which we diagonalize is indexed by finite sequences in $\FIN$ instead of in $E$.

\begin{thm}\label{thm:FIN_diag_p+}
	Let $\LF$ be a $(p^+)$-filter on $E$. Then, whenever $(\langle X_{\vec{a}}\rangle)_{\vec{a}\in\FIN^{[<\infty]}}$ is contained in $\LF$, there is an $\langle X\rangle\in\LF$ such that $X/\vec{a}\preceq X_{\vec{a}}$ whenever $\vec{a}\sqsubseteq\supp(X)$.
\end{thm}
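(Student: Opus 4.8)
The plan is to reduce everything to the support ultrafilter and then lift back to vectors. Since $\LF$ is a $(p^+)$-filter, $\LU:=\supp(\LF)$ is a stable ordered-union ultrafilter by Theorem \ref{thm:supp_ordered_union}, and hence selective, as shown in Section 2. The reason one expects $(p^+)$ (rather than the full strong $(p)$-property) to suffice here is structural: the given family is indexed by $\FIN^{[<\infty]}$, which is exactly the index set appearing in the selectivity of $\LU$, because the constraints $X_{\vec a}$ depend on $\vec a$ only through supports.

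Before invoking selectivity I would replace $(X_{\vec a})$ by a \emph{monotone refinement}. Working by recursion on $|\vec a|$ and using that $\LF$ is a block filter closed under finite intersections, I can choose $\langle X'_{\vec a}\rangle\in\LF$ with $X'_{\vec a}\preceq X_{\vec a}$ and with the monotonicity that $\vec a\preceq\vec a'$ implies $X'_{\vec a'}\preceq X'_{\vec a}$ (refining the indexing block sequence shrinks the space); when defining $X'_{\vec a}$ one need only dominate the finitely many strict coarsenings of $\vec a$, all of shorter length, so the recursion is legitimate. The payoff is that the target condition becomes purely local: if $X=(x_k)$ is any block sequence with each $x_k\in\langle X'_{\vec c_k}\rangle$, where $\vec c_k=(\supp(x_0),\dots,\supp(x_{k-1}))$, then for every $\vec a\sqsubseteq\supp(X)$ one has $X/\vec a\preceq X_{\vec a}$, since for $\vec a=\vec c_j$ and $k\ge j$ the relations $\vec c_j\preceq\vec c_k$ and monotonicity give $x_k\in\langle X'_{\vec c_k}\rangle\subseteq\langle X'_{\vec c_j}\rangle\subseteq\langle X_{\vec a}\rangle$. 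Thus it remains only to manufacture such an $X$ inside $\LF$.

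To build $X$, I would apply selectivity of $\LU$ to the supports $A'_{\vec a}:=\supp(X'_{\vec a})$, obtaining $\langle B\rangle\in\LU$ with the exact containments $B/(B\restr l)\preceq A'_{B\restr l}$ for all $l$, and then realize $B$ by vectors through fullness. Writing $\vec c(v)$ for the block sequence of $B$-blocks lying entirely below $\min(\supp(v))$, I would set
\[
	D=\{v\in E\setminus\{0\}:\supp(v)\preceq B\text{ and }v\in\langle X'_{\vec c(v)}\rangle\}.
\]
Since $\vec c_k\preceq\vec c(x_k)$ for any block $x_k$ of a sequence drawn from $D$, monotonicity upgrades membership in $\langle X'_{\vec c(x_k)}\rangle$ to membership in $\langle X'_{\vec c_k}\rangle$, so the local reduction of the previous paragraph shows that \emph{any} block subspace $\langle X\rangle\subseteq D$ satisfies the desired tail condition. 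Hence, once I prove that $D$ is $\LF$-dense, fullness gives $\langle X\rangle\in\LF$ with $\langle X\rangle\subseteq D$, finishing the proof.

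The hard part will be the $\LF$-density of $D$. Given $\langle Y\rangle\in\LF$ I can first refine $Y$ inside $\LF$ so that $\supp(Y)\preceq B$ (legitimate because $\langle\supp(Y)\rangle$ and $\langle B\rangle$ both lie in the ultrafilter $\LU$, and $\langle B\rangle\in\LU=\supp(\LF)$ supplies an element of $\LF$ to intersect with). The genuine difficulty is that when $|F|>2$ a vector is not determined by its support: selectivity guarantees only that the correct supports $b_l$ occur inside $\langle X'_{B\restr l}\rangle$, not that a single vector of that support lies simultaneously in $\langle Y\rangle$ and in the shrinking spaces $\langle X'_{B\restr l}\rangle$ as $l$ grows. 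Reconciling the support-level control coming from selectivity with the vector-level membership demanded by $D$ — equivalently, upgrading the merely mod-finite containments that the $(p)$-property alone would yield to the exact per-block containments needed — is the crux, and it is precisely here that the exact synchronization furnished by selectivity of $\LU$ must be combined with the projection Lemma \ref{lem:supp_proj} to select vectors of prescribed support. I expect this density verification, rather than any of the bookkeeping above, to be where the substance of the argument lies.
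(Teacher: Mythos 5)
Your reductions up to the density claim are all correct: the monotone refinement $X'_{\vec{a}}$ exists by recursion on length, the local criterion (each $x_k\in\langle X'_{\vec{c}_k}\rangle$ implies the conclusion) is right, and so is the observation that any block subspace contained in $D$ witnesses the theorem. But the proof stops exactly at its load-bearing step: the $\LF$-density of $D$ is named as ``the crux'' and never established, and with your choice of $B$ there is no apparent way to establish it with the tools you cite. The obstruction is a race. Your $B$ carries only the support-level information $B/\vec{a}\preceq\supp(X'_{\vec{a}})$. In the density verification, given $\langle Y\rangle\in\LF$, the only visible mechanism for certifying that a vector $v$ with $\supp(v)\preceq B$ and first block $b_l$ lies in $\langle X'_{(b_0,\ldots,b_{l-1})}\rangle$ is to route through a $(p)$-diagonalization $X^*\preceq^* X'_{\vec{a}}$ with speeds $m_{\vec{a}}$ (where $X^*/m_{\vec{a}}\preceq X'_{\vec{a}}$), which forces the requirement $\min(b_l)>m_{(b_0,\ldots,b_{l-1})}$. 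Nothing in your construction ties the positions of the blocks of $B$ to these speeds, and moving $v$ to the right cannot help: it lengthens $\vec{c}(v)$ and shrinks the target space, so the requirement only gets stronger. Lemma \ref{lem:supp_proj} does not close this: it lifts $B$ into a single ambient block sequence, but cannot place the lifted vectors simultaneously inside $\langle Y\rangle$ and inside the position-dependent spaces $\langle X'_{\vec{c}}\rangle$. Since everything else in your argument is routine, this unproved claim is carrying the entire content of the theorem.

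The gap is repairable, but it needs one idea your proposal lacks: synchronize $B$ with the speeds \emph{before} defining anything like $D$. First take the $(p)$-diagonalization $\langle X^*\rangle\in\LF$ with $X^*\preceq^* X'_{\vec{a}}$ for all $\vec{a}$, fix speeds $m_{\vec{a}}$ with $X^*/m_{\vec{a}}\preceq X'_{\vec{a}}$, and only then apply selectivity of $\LU$ --- to the family $A_{\vec{a}}:=\supp(X^*)/m_{\vec{a}}$, not to $\supp(X'_{\vec{a}})$. The resulting $\langle B\rangle\in\LU$ satisfies $B/\vec{a}\preceq\supp(X^*)/m_{\vec{a}}$ for all $\vec{a}\preceq B$, so every $B$-block automatically begins above the speed demanded by the blocks below it. Then any $\langle Z\rangle\in\LF$ with $Z\preceq X^*$ and with the supports of vectors of $\langle Z\rangle$ lying in $\langle B\rangle$ (such $Z$ exists because $\langle B\rangle\in\supp(\LF)$) already satisfies the conclusion, by your own monotonicity computation; the detour through $D$, density, and fullness evaporates. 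For comparison, the paper dodges the race differently: after the $(p)$-diagonalization it forms a support-determined Borel set by quantifying universally over all vectors with a given support, and applies Milliken--Taylor homogeneity (Theorem \ref{thm:local_Milliken_Taylor}) to $\supp(\LF)$; there the constraint on each block refers to the \emph{previously chosen} blocks rather than to absolute positions in a fixed $B$, so in the thinning argument one can always move far enough to the right. Either way, the step you left open is where the theorem actually lives.
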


\begin{proof}
	Let $(\langle X_{\vec{a}}\rangle)_{\vec{a}\in\FIN^{[<\infty]}}$ be given as described. Since $\FIN^{[<\infty]}$ is countable and $\LF$ is a $(p)$-filter, there is an $\langle X\rangle\in\LF$ such that $X\preceq^* X_{\vec{a}}$ for all $\vec{a}\in\FIN^{[<\infty]}$. Writing $\supp(X)^{[<\infty]}$ for those finite block sequences in $\FIN$ coming from $\langle X\rangle$, let
	\[
		\LB=\{\vec{a}\concat b\in \supp(X)^{[<\infty]}:\forall v\in\langle X\rangle(\supp(v)=b\rightarrow v\in \bigcap\{\langle X_{\vec{c}}\rangle:\vec{c}\sqsubseteq\vec{a}\})\}
	\]
	and
	\[
		\B=\{A\in \supp(X)^{[\infty]}:\forall n(A\restr n\in\LB)\}.
	\]
	Clearly, $\B$ is a Borel subset of $\supp(X)^{[\infty]}$. By Theorem \ref{thm:local_Milliken_Taylor} applied to the stable ordered-union ultrafilter (by Theorem \ref{thm:supp_ordered_union}) $\supp(\LF)$, there is $\langle Y\rangle\in\LF$ with $Y=(y_n)\preceq X$, such that either $\supp(Y)^{[\infty]}\subseteq\B$ or $\supp(Y)^{[\infty]}\cap\B=\emptyset$. Note, however, that the latter is impossible: Since $Y\preceq^* X_{\vec{a}}$ for all $\vec{a}\in\FIN^{[<\infty]}$, we can thin $Y=(y_n)$ out to a subsequence $Y'=(y_{n_k})$ such that $\supp(Y')\in\B$: take
	\begin{align*}
		y_{n_0}&\in\langle X_\emptyset\rangle\\
		y_{n_1}&\in\langle X_\emptyset\rangle\cap\langle X_{(\supp(y_{n_0}))}\rangle\\
		y_{n_2}&\in\langle X_{\emptyset}\rangle\cap\langle X_{(\supp(y_{n_0}))}\rangle\cap\langle X_{(\supp(y_{n_0}),\supp(y_{n_1}))}\rangle
	\end{align*}
	and so on. Thus, $\supp(Y)^{[\infty]}\subseteq\B$, and in particular, $\supp(Y)\in\B$, so $Y/\vec{a}\preceq X_{\vec{a}}$ whenever $\vec{a}\sqsubseteq \supp(Y)$.
\end{proof}

%
%

\begin{cor}\label{cor:p+_diag_max_supp}
	Let $\LF$ be a $(p^+)$-filter on $E$. Then, whenever $\langle X_n\rangle$ is in $\LF$ for all $n$, there is an $\langle X\rangle\in\LF$, with $X=(x_n)$, such that $X/x_n\preceq X_{\max(\supp(x_n))}$ for all $n$.\end{cor}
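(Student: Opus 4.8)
The statement is essentially a repackaging of Theorem \ref{thm:FIN_diag_p+}. The plan is to convert the countable sequence $(\langle X_n\rangle)$ into a family indexed by $\FIN^{[<\infty]}$, apply that theorem, and then read off the desired conclusion. The key idea is to index by the $\max$ of the \emph{last entry} of a finite block sequence, so that the $\FIN^{[<\infty]}$-diagonalization produced by Theorem \ref{thm:FIN_diag_p+} aligns precisely with the requested indexing by $\max(\supp(x_n))$.

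Concretely, I would first define, for each nonempty $\vec{a}=(a_0,\dots,a_k)\in\FIN^{[<\infty]}$, the set $\langle X_{\vec{a}}\rangle:=\langle X_{\max(a_k)}\rangle$, and set $\langle X_\emptyset\rangle:=\langle X_0\rangle$. Since each $\langle X_n\rangle\in\LF$, the resulting family $(\langle X_{\vec{a}}\rangle)_{\vec{a}\in\FIN^{[<\infty]}}$ is contained in $\LF$. Applying Theorem \ref{thm:FIN_diag_p+} then yields an $\langle X\rangle\in\LF$ such that $X/\vec{a}\preceq X_{\vec{a}}$ whenever $\vec{a}\sqsubseteq\supp(X)$.

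It then remains to unwind this into the claimed conclusion. Write $X=(x_n)$, so that $\supp(X)=(a_n)$ with $a_n=\supp(x_n)$. Fixing $n$ and taking $\vec{a}=\supp(X)\restr(n+1)=(a_0,\dots,a_n)$, which is an initial segment of $\supp(X)$, I would observe that because $X$ is a block sequence, the vectors of $X$ whose supports lie entirely above $\max(a_n)=\max(\supp(x_n))$ are exactly $x_{n+1},x_{n+2},\dots$; hence $X/\vec{a}=X/x_n$. On the other hand, by construction $X_{\vec{a}}=X_{\max(a_n)}=X_{\max(\supp(x_n))}$. Combining these gives $X/x_n=X/\vec{a}\preceq X_{\vec{a}}=X_{\max(\supp(x_n))}$, which is exactly the asserted diagonalization; as $n$ was arbitrary, this finishes the argument.

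The only point requiring care is the bookkeeping in the last paragraph: verifying that the tail $X/\vec{a}$ cut off by an initial segment of $\supp(X)$ coincides with the tail $X/x_n$ cut off by the single vector $x_n$, and that indexing by $\max$ of the last entry of $\vec{a}$ returns $X_{\max(\supp(x_n))}$. I do not expect a genuine obstacle here; the whole content is carried by Theorem \ref{thm:FIN_diag_p+}, and this corollary is simply its specialization from arbitrary $\FIN^{[<\infty]}$-indexed families to those depending only on the maximum of the last support.
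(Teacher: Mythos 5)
Your proposal is correct and is essentially identical to the paper's own (one-line) proof: the paper also sets $X_{\vec{a}}=X_{\max(\vec{a})}$ for $\vec{a}\in\FIN^{[<\infty]}$ and applies Theorem \ref{thm:FIN_diag_p+}, with the unwinding of $X/\vec{a}=X/x_n$ left implicit. Your extra care with the empty sequence and the tail bookkeeping is fine but adds nothing beyond what the paper's argument already contains.
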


\begin{proof}
	Given $\langle X_n\rangle$ as described, let $X_{\vec{a}}=X_{\max(\vec{a})}$ for all $\vec{a}\in\FIN^{[<\infty]}$ and apply Theorem \ref{thm:FIN_diag_p+}.
\end{proof}

\begin{cor}\label{cor:p+_spread}
	Every $(p^+)$-filter on $E$ is spread.	
\end{cor}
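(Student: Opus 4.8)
The plan is to read off spreading directly from the diagonalization already obtained in Corollary~\ref{cor:p+_diag_max_supp}. Fix a sequence of intervals $I_0<I_1<I_2<\cdots$. The key idea is to apply that corollary to the cofinite-codimensional subspaces lying strictly \emph{above} successive intervals, so that the diagonal block sequence it returns is forced to leave a whole interval between each pair of consecutive vectors.

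Concretely, for each $n$ I would let $X_n=(e_i)_{i>\max(I_{n+1})}$, so that $\langle X_n\rangle$ is a subspace of finite codimension and hence lies in $\LF$. Applying Corollary~\ref{cor:p+_diag_max_supp} to $(\langle X_n\rangle)_n$ yields $\langle X\rangle\in\LF$, with $X=(x_n)$, such that $X/x_n\preceq X_{\max(\supp(x_n))}$ for every $n$. In particular $x_{n+1}\in\langle X_k\rangle$ where $k=\max(\supp(x_n))$, so $\min(\supp(x_{n+1}))>\max(I_{k+1})$. Setting $m=k+1$, this is precisely $I_m<\supp(x_{n+1})$. For the matching lower inequality I would invoke the elementary fact that a strictly increasing sequence of intervals satisfies $\min(I_j)\ge j$ (an immediate induction from $\min(I_{j+1})>\max(I_j)\ge\min(I_j)$), so that $\min(I_{k+1})\ge k+1>k=\max(\supp(x_n))$, i.e.\ $\supp(x_n)<I_m$. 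Thus every consecutive pair $x_n,x_{n+1}$ has an interval $I_m$ strictly between their supports.

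It then remains to arrange $I_0<\supp(x_n)$ uniformly in $n$, which the corollary does not control for the very first vector $x_0$. For this I would pass to the tail $X'=X/\max(I_0)$; since $\LF$ is a filter containing all finite-codimension subspaces, $\langle X'\rangle=\langle X\rangle\cap\langle(e_i)_{i>\max(I_0)}\rangle\in\LF$. Every consecutive pair of $X'$ is a consecutive pair of $X$, so the interval-between-consecutive property proved above is inherited, while now $\min(\supp(x'_n))>\max(I_0)$ for all $n$. Hence $X'$ witnesses that $\LF$ is spread.

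I do not expect a genuine obstacle here: given Corollary~\ref{cor:p+_diag_max_supp} the argument is essentially bookkeeping. The one point requiring care is the index shift---defining $X_n$ in terms of $I_{n+1}$ and taking $m=k+1$ rather than $m=k$---which is forced because $\min(I_k)$ could equal $\max(\supp(x_n))$ and would then fail the strict inequality $\supp(x_n)<I_m$ under the naive choice.
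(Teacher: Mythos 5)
Your proof is correct and takes essentially the same approach as the paper's: both apply Corollary~\ref{cor:p+_diag_max_supp} to tails of the basis cut off above suitable intervals, and then pass to a tail of the resulting diagonalization to secure the condition $I_0<\supp(x_n)$. The only difference is bookkeeping --- the paper indexes the tails via the least $m_k$ with $k\leq\max(I_{m_k})$, whereas you use the index $k+1$ together with the observation that $\min(I_j)\geq j$; both are valid.
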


\begin{proof}
	Let $\LF$ be a $(p^+)$-filter and $I_0<I_1<\cdots$ be an increasing sequence of nonempty intervals in $\omega$. Let $X=(e_n)$. For each $k\in\omega$, let $m_k$ be the least integer such that $k\leq\max(I_{m_k})$ and let $X_k=X/\max(I_{m_k+1})$. Let $Y=(y_n)$, with $\langle Y\rangle\in\LF$, be as in Corollary \ref{cor:p+_diag_max_supp}. We may assume $Y\preceq X/\max(I_0)$. Then, for any $n$, if $k=\max(\supp(y_n))$, then $Y/k\preceq X/\max(I_{m_k+1})$, and so $I_0<\supp(y_n)<I_{m_k+1}<\supp(y_{n+1})$.
\end{proof}

When $F$ is a finite field, we can go one step further:

\begin{cor}\label{cor:finite_p+_strong}
	Assume $|F|<\infty$. Every $(p^+)$-filter on $E$ is a strong $(p^+)$-filter.	
\end{cor}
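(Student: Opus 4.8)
The plan is as follows. Since a $(p^+)$-filter is by definition full, and a strong $(p^+)$-filter is precisely a full strong $(p)$-filter, it suffices to show that $\LF$ has the strong $(p)$-property; fullness is then inherited for free. The key idea is to reduce the diagonalization indexed by $E^{[<\infty]}$ (the strong $(p)$-property) to the diagonalization indexed by $\FIN^{[<\infty]}$ already established in Theorem \ref{thm:FIN_diag_p+}, and this reduction is exactly where the finiteness of $F$ is used.

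So suppose $(\langle X_{\vec{x}}\rangle)_{\vec{x}\in E^{[<\infty]}}\subseteq\LF$. For each $\vec{a}\in\FIN^{[<\infty]}$, there are only finitely many $\vec{x}\in E^{[<\infty]}$ with $\supp(\vec{x})=\vec{a}$: writing $\vec{a}=(a_0,\ldots,a_{n-1})$, such an $\vec{x}$ is a choice, in each coordinate $i$, of a vector with support exactly $a_i$, and there are precisely $(|F|-1)^{|a_i|}$ such vectors, a finite number since $|F|<\infty$. Hence $\bigcap\{\langle X_{\vec{x}}\rangle:\supp(\vec{x})=\vec{a}\}$ is a \emph{finite} intersection of members of $\LF$ and so lies in $\LF$; as $\LF$ is a block filter, I may choose $\langle X_{\vec{a}}'\rangle\in\LF$ with $\langle X_{\vec{a}}'\rangle\subseteq\bigcap\{\langle X_{\vec{x}}\rangle:\supp(\vec{x})=\vec{a}\}$.

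Now apply Theorem \ref{thm:FIN_diag_p+} to the family $(\langle X_{\vec{a}}'\rangle)_{\vec{a}\in\FIN^{[<\infty]}}$, obtaining $\langle X\rangle\in\LF$ with $X/\vec{a}\preceq X_{\vec{a}}'$ whenever $\vec{a}\sqsubseteq\supp(X)$. To verify the strong $(p)$-property, let $\vec{x}\sqsubseteq X$ and put $\vec{a}=\supp(\vec{x})$, so that $\vec{a}\sqsubseteq\supp(X)$ and, since tails depend only on supports, $X/\vec{x}=X/\vec{a}$. Then $X/\vec{x}=X/\vec{a}\preceq X_{\vec{a}}'\preceq X_{\vec{x}}$, the last inequality because $\langle X_{\vec{a}}'\rangle\subseteq\langle X_{\vec{x}}\rangle$ by construction. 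This is exactly $X/\vec{x}\preceq X_{\vec{x}}$, so $X$ witnesses the strong $(p)$-property, and $\LF$ is a strong $(p^+)$-filter.

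The only place the hypothesis $|F|<\infty$ enters---and the only real content of the argument---is this collapsing step: with $F$ infinite there could be infinitely many vectors of a given support, the intersection $\bigcap\{\langle X_{\vec{x}}\rangle:\supp(\vec{x})=\vec{a}\}$ would be infinite, and there would be no reason for it to remain in $\LF$. Accordingly, I expect no serious obstacle beyond making this finiteness observation precise; the remainder is bookkeeping with the definition of the tail $X/\vec{x}$ and a direct appeal to Theorem \ref{thm:FIN_diag_p+}.
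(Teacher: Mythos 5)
Your proof is correct and follows essentially the same route as the paper's: use the finiteness of $F$ to collapse the $E^{[<\infty]}$-indexed family to a $\FIN^{[<\infty]}$-indexed family by taking finite intersections, then apply Theorem \ref{thm:FIN_diag_p+} and note $X/\vec{x}=X/\supp(\vec{x})$. The only (immaterial) difference is that you intersect over $\{\vec{x}:\supp(\vec{x})=\vec{a}\}$ while the paper intersects over the slightly larger (still finite) set $\{\vec{x}:\supp(\vec{x})\sqsubseteq\vec{a}\}$; both suffice for the final verification.
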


\begin{proof}
	Let $\LF$ be a $(p^+)$-filter and $(\langle X_{\vec{x}}\rangle)_{\vec{x}\in{E^{[<\infty]}}}$ in $\LF$. Note that since $|F|<\infty$, for each $a\in\FIN$, there are only finitely many vectors $v\in E$ having support contained in $a$. For each $\vec{a}\in\FIN^{[<\infty]}$, let $\langle X_{\vec{a}}\rangle\in\LF$ be such that
	\[
		\langle X_{\vec{a}}\rangle \subseteq\bigcap\{\langle X_{\vec{x}}\rangle:\supp(\vec{x})\sqsubseteq\vec{a}\}.
	\]
	By Theorem \ref{thm:FIN_diag_p+}, there is a $\langle X\rangle\in\LF$ such that $X/\vec{a}\preceq X_{\vec{a}}$ for all $\vec{a}\sqsubseteq\supp(X)$. So, if $\vec{x}\sqsubseteq X$, then 
	\[
		X/\vec{x}=X/\supp(\vec{x})\preceq X_{\supp(\vec{x})}\preceq X_{\vec{x}}.
	\]
	This verifies the strong $(p)$-property.
\end{proof}

We do not know if Corollary \ref{cor:finite_p+_strong} holds for infinite fields.

We note here that the spread condition is analogous to another property for ultrafilters on $\omega$: Recall that an ultrafilter $\LU$ on $\omega$ is a \emph{q-point} if for every partition $\bigcup_m I_m$ of $\omega$ into finite sets, there exists an $x\in\LU$ such that $\forall m(|x\cap I_m|\leq 1)$. It is well-known that every selective ultrafilter is a q-point, though the converse (consistently) fails. Let's say (temporarily) that an ultrafilter $\LU$ on $\omega$ is \emph{spread} if for every sequence of finite intervals $I_0<I_1<I_2<\cdots$ in $\omega$, there exists an $x\in\LU$ such that for every $n$, there is an $m$ such that $I_0<x_n<I_m<x_{n+1}$, where $(x_n)$ is the increasing enumeration of $x$.

\begin{prop}\label{ref:prop_spread_q}
	Let $\LU$ be an ultrafilter on $\omega$. The following are equivalent:
	\begin{enumerate}[label=\rm{(\roman*)}]
		\item $\LU$ is a q-point
		\item For every sequence of finite sets $I_0<I_1<I_2<\cdots$ in $\N$, there exists a $x\in\LU$ such that $\forall m(|x\cap I_m|\leq 1)$
		\item $\LU$ is spread.
	\end{enumerate}
\end{prop}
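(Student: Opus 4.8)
The plan is to establish the cycle (i) $\Rightarrow$ (iii) $\Rightarrow$ (ii) $\Rightarrow$ (i), reducing all three conditions to the existence of \emph{selectors} (sets meeting each piece in at most one point) for partitions of $\omega$ into finite intervals, together with a uniform two-coloring device. Since intervals are a special case of the increasing finite sets appearing in (ii), and these in turn are a special case of the finite partitions in (i), the content lies entirely in upgrading the weaker selection properties back to the stronger ones.

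For (i) $\Rightarrow$ (iii), given intervals $I_0 < I_1 < \cdots$ I would form the interval partition of $\omega$ with first block $[0,\max I_0]$ and subsequent blocks $B_j = (\max I_j, \max I_{j+1}]$, so that $I_{j+1} \subseteq B_j$. Condition (i) yields a selector $z \in \LU$; intersecting with the cofinite set $\{n : n > \max I_0\}$ and then retaining the parity class of block-indices that lies in $\LU$ produces $x \in \LU$ whose consecutive elements occupy blocks $B_a, B_b$ with $b \ge a+2$. A short computation then locates the full interval $I_{a+2} \subseteq B_{a+1}$ strictly between them: $x_n \le \max I_{a+1} < \min I_{a+2}$ while $x_{n+1} > \max I_b \ge \max I_{a+2}$. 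Combined with $x_n > \max I_0$, this is exactly the spread condition with $m = a+2$.

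For (iii) $\Rightarrow$ (ii), given increasing finite sets $I_0 < I_1 < \cdots$ I would pass to their convex hulls $\hat I_m = [\min I_m, \max I_m]$, which form an increasing sequence of pairwise disjoint intervals. Applying (iii) to $(\hat I_m)$ gives $x \in \LU$ admitting a full $\hat I_m$ strictly between each consecutive pair of its elements; since distinct hulls are disjoint intervals, no two consecutive --- hence no two --- elements of $x$ can lie in a common $\hat I_m$, so $x$ meets each $I_m \subseteq \hat I_m$ in at most one point, which is precisely (ii).

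The step (ii) $\Rightarrow$ (i) is the main obstacle, since here the interleaving of an arbitrary finite partition $\{A_n\}$ must be tamed: a selector for intervals need not respect blocks scattered across the line. I would coarsen to intervals using the block-maximum function $g(k) = \max(\text{block of } k)$ and its running maximum $g^*$, setting $i_0 = 0$ and $i_{m+1} = g^*(i_m)+1$ to obtain intervals $J_m = [i_m, i_{m+1})$ with the property that every block $A_n$ meets at most two \emph{consecutive} $J_m$. As intervals qualify as increasing finite sets, (ii) supplies a selector $y \in \LU$ for $(J_m)$; splitting $y$ by the parity of the interval index and keeping the class in $\LU$ yields $x \in \LU$. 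Two points of $x$ inside a common $A_n$ would have to lie either in a single $J_m$ (impossible, as $x$ meets each interval at most once) or in two consecutive $J_m$'s (impossible, as these have opposite parity), so $x$ is a selector for $\{A_n\}$, giving (i). The recurring difficulty throughout is that bare ``distinct-block'' selection is strictly weaker than what (i) and (iii) demand; isolating the two-coloring trick --- coarsen so each block spans at most two consecutive intervals, then pass to one parity class inside $\LU$ --- is exactly what reconciles the three formulations.
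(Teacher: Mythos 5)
Your proposal is correct, but it runs the cycle in the opposite direction from the paper: you prove (i) $\Rightarrow$ (iii) $\Rightarrow$ (ii) $\Rightarrow$ (i), whereas the paper proves (i) $\Rightarrow$ (ii) $\Rightarrow$ (iii) $\Rightarrow$ (i), taking (i) $\Rightarrow$ (ii) as the trivial step. Both arguments rest on the same two devices --- coarsening a partition into finite sets to an interval partition in which each original block is contained in the union of two consecutive intervals, and splitting a selector by the parity of the block index and keeping the half that lies in $\LU$ --- but they are distributed differently. In the paper the parity trick appears only once, in (ii) $\Rightarrow$ (iii) (three consecutive points of a selector lie in three distinct intervals, and the middle interval separates the outer two), while the hard implication (iii) $\Rightarrow$ (i) needs no parity at all: after coarsening so that each block $I_m$ sits inside $J_{n-1}\cup J_n$, the spread property already provides a whole interval $J_k$ strictly between consecutive points of $x$, which at once forbids two points of $x$ in one block. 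Your route instead treats (ii) as the weakest hypothesis, so you must invoke the parity trick twice (in (i) $\Rightarrow$ (iii) and again in (ii) $\Rightarrow$ (i)); in exchange, (iii) $\Rightarrow$ (ii) becomes essentially content-free (convex hulls of increasing finite sets are disjoint intervals), and your running-maximum coarsening $i_{m+1}=g^*(i_m)+1$ is a cleaner, more explicit formulation of the paper's inductive ``smallest covering interval'' construction, achieving exactly the same two-consecutive-intervals property. A minor further merit of your (i) $\Rightarrow$ (iii): by building the covering partition $[0,\max I_0]$, $B_j=(\max I_j,\max I_{j+1}]$ of all of $\omega$, you sidestep a point the paper's (ii) $\Rightarrow$ (iii) leaves implicit, namely that the given intervals must first be enlarged to cover a cofinite set before one can assert that every point of the selector lies in some $I_k$.
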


\begin{proof}
	(i $\Rightarrow$ ii): This is trivial.
	
	(ii $\Rightarrow$ iii): Let $I_0<I_1<I_2<\cdots$ be a sequence of intervals in $\omega$. Let $x\in\LU$ be as in (ii). We may assume that $I_0<x_0$. We partition $x=u\cup v$ as follows: $u_n=x_{2n}$ and $v_n=x_{2n+1}$ for all $n$, where $(x_n)$ is the increasing enumeration of $y$.  For every $n$, since $u_n=x_{2n}$, $v_n=x_{2n+1}$, and $u_{n+1}=x_{2n+2}$ must be contained in three distinct $I_k$'s, the middle interval must separate $u_n$ and $u_{n+1}$, that is, there is an $m$ such that $I_0<u_n<I_m<u_{n+1}$. Similarly for the $v_n$. Since $\LU$ is an ultrafilter, one of $u$ or $v$ must be in $\LU$.
	
	(iii $\Rightarrow$ i): Let $\bigcup_m I_m$ be a partition of $\omega$ into finite sets. We define an interval partition $\omega=\bigcup_k J_k$ as follows: $J_0=[0,\max I_0]$. Let $J_1$ be the smallest interval immediately above $J_0$ such that $J_0\cup J_1$ covers $I_1$ and all $I_m$ for which $I_m\cap J_0\neq\emptyset$. Continue in this fashion, letting $J_{k+1}$ be the smallest interval immediately above $J_k$ such that $J_0\cup\cdots\cup J_k\cup J_{k+1}$ covers $I_{k+1}$ and all $I_m$ for which $I_m\cap(J_0\cup\cdots\cup J_k)\neq\emptyset$. Let $x\in\LU$ be as in the definition of spread applied to $J_0<J_1<\cdots$. Towards a contradiction, suppose that $x_i<x_j$ are both in some $I_m$. Let $n$ be the least such that $I_m\subseteq J_0\cup\cdots\cup J_n$. We may assume $n>1$ (otherwise, we are done). By minimality of $n$, $I_m\cap(J_0\cup\cdots\cup J_{n-2})=\emptyset$. Thus, $I_m\subseteq J_{n-1}\cup J_n$. But then, $x_i$ and $x_j$ fail to be separated by one of the $J_k$'s, contrary to $x$ witnessing that $\LU$ is spread.
\end{proof}

\section{The restricted Gowers game and strategic filters}

Given a block filter $\LF$ and $\langle X\rangle\in\LF$, we define the \emph{restricted Gowers game} $G_\LF[X]$ below $X$ exactly like $G[X]$ except that player I is restricted to playing $Y\preceq X$ such that $\langle Y\rangle\in\LF$.  Since all subspaces spanned by tails of $X$ are automatically in $\LF$, we may think of $G_\LF[X]$ as an intermediate between the games $F[X]$ and $G[X]$. Throughout this section, we will say that an outcome $Y$ of one of the games is ``in $\LF$'' if $\langle Y\rangle$ is. Our first result here relates strategies for I in $G_\LF[X]$ to the strong $(p)$-property, and is based on a characterization of selective ultrafilters (Theorem 11.17(b) in \cite{MR3751612}).

\begin{thm}\label{thm:strong_p_restr_game}	
	Let $\LF$ be a block filter on $E$. $\LF$ has the strong $(p)$-property if and only if for every $X\in\LF$ and every strategy $\sigma$ for I in $G_{\LF}[X]$, there is an outcome of $\sigma$ in $\LF$.
\end{thm}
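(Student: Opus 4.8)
The statement is an \emph{if and only if}, so I would prove the two implications separately, following the template of the game-theoretic characterization of selectivity (Theorem 11.17(b) in \cite{MR3751612}). The recurring principle is that a strategy $\sigma$ for I in $G_{\LF}[X]$ is essentially the same data as a family $(\langle X_{\vec{x}}\rangle)_{\vec{x}\in{E^{[<\infty]}}}$ in $\LF$ indexed by the possible finite partial plays of II, and that the condition ``$X/\vec{x}\preceq X_{\vec{x}}$ for all $\vec{x}\sqsubseteq X$'' is exactly the assertion that $X$, read as a sequence of moves for II, is a legal outcome consistent with $\sigma$.

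For the forward direction, assume $\LF$ has the strong $(p)$-property and fix $X$ with $\langle X\rangle\in\LF$ and a strategy $\sigma$ for I in $G_{\LF}[X]$. I view $\sigma$ as assigning to each legal partial play $\vec{y}=(y_0,\dots,y_{k-1})$ a subspace $\langle\sigma(\vec{y})\rangle\in\LF$; I set $X_{\vec{x}}:=\sigma(\vec{x})$ when $\vec{x}$ is a legal partial play and $X_{\vec{x}}:=X$ otherwise, so that $(\langle X_{\vec{x}}\rangle)_{\vec{x}\in{E^{[<\infty]}}}\subseteq\LF$. Applying the strong $(p)$-property yields $\langle Z\rangle\in\LF$ with $Z/\vec{x}\preceq X_{\vec{x}}$ for all $\vec{x}\sqsubseteq Z$. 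The plan is then to verify by induction on $k$ that $Z=(z_k)$ is itself an outcome of $\sigma$: assuming $(z_0,\dots,z_{k-1})$ is a legal partial play, the relation $Z/(z_0,\dots,z_{k-1})=(z_k,z_{k+1},\dots)\preceq X_{(z_0,\dots,z_{k-1})}=\sigma(z_0,\dots,z_{k-1})$ gives $z_k\in\langle\sigma(z_0,\dots,z_{k-1})\rangle$, which is precisely the legality requirement for II's $k$th response against $\sigma$. Hence $Z$ is an outcome of $\sigma$ lying in $\LF$.

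For the reverse direction, assume every strategy for I admits an outcome in $\LF$, and let $(\langle X_{\vec{x}}\rangle)_{\vec{x}\in{E^{[<\infty]}}}\subseteq\LF$ be given. I build a strategy $\sigma$ for I in $G_{\LF}[X_\emptyset]$ that forces any outcome to be a diagonalization: after II has played $(y_0,\dots,y_{k-1})$, I instruct I to play a block subspace $Y_k$ with $\langle Y_k\rangle\in\LF$ and $\langle Y_k\rangle\subseteq\bigcap_{i=0}^{k}\langle X_{(y_0,\dots,y_{i-1})}\rangle$. This move is available because the right-hand side is a finite intersection of members of $\LF$, hence lies in $\LF$, and because $\LF$ is a block filter some block subspace of $\LF$ sits inside it; moreover the $i=0$ term $\langle X_\emptyset\rangle$ forces $\langle Y_k\rangle\subseteq\langle X_\emptyset\rangle$, so $Y_k\preceq X_\emptyset$ and the move is legal. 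By hypothesis there is an outcome $(y_k)$ of $\sigma$ with $\langle(y_k)\rangle\in\LF$, and since each $y_k\in\langle Y_k\rangle\subseteq\langle X_{(y_0,\dots,y_{i-1})}\rangle$ for every $i\leq k$, the sequence $X:=(y_k)$ satisfies $X/\vec{x}\preceq X_{\vec{x}}$ for all $\vec{x}\sqsubseteq X$, establishing the strong $(p)$-property.

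I expect the main difficulty to be organizational rather than conceptual: one must set up the dictionary between strategies for I and indexed families $(\langle X_{\vec{x}}\rangle)$ with care, checking that every prescribed move of I is legal (in particular $\preceq X_\emptyset$ and in $\LF$) and that the sequence produced in each direction is genuinely an outcome consistent with the relevant strategy, keeping the tail notation $X/\vec{x}$ aligned with the inning structure of the game. The only structural feature of $\LF$ actually invoked is that it is a block filter closed under finite intersections, so that the finite intersections appearing in the reverse direction again contain block subspaces from $\LF$; notably, neither fullness nor any appeal to $\supp(\LF)$ is needed here.
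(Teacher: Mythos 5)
Your proposal is correct and follows essentially the same argument as the paper: in both directions it uses the same dictionary between strategies for I in $G_{\LF}[X]$ and $\LF$-indexed families $(\langle X_{\vec{x}}\rangle)_{\vec{x}\in{E^{[<\infty]}}}$, diagonalizing I's moves via the strong $(p)$-property in one direction and building a strategy whose outcomes are forced to be diagonalizations in the other. The only differences are cosmetic: the paper runs both directions by contraposition and has I's moves dominate \emph{all} of its prior moves along a play (your observation that only I's latest move matters for legality of II's next move is a valid slight streamlining), while you argue directly; the mathematical content is identical.
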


\begin{proof}	
	($\Rightarrow$) Towards a contradiction, suppose that $\sigma$ is a strategy for I in $G_{\LF}[X]$, $\langle X\rangle\in\LF$, and no outcome of $\sigma$ is in $\LF$. Define sets $\LA_{\vec{x}}\subseteq\LF$ as follows: $\LA_\emptyset=\{\langle\sigma(\emptyset)\rangle\}$ and in general, $\LA_{\vec{x}}$ is the set of all $\langle Y\rangle\in\LF$ such that $Y$ is played by I, when I follows $\sigma$ and $\vec{x}=(x_0,\ldots,x_{n-1})$ are the first $n$ moves by II. Some $\vec{x}$ may not be valid moves for II against $\sigma$, in which case we let $\LA_{\vec{x}}=\LA_{\vec{x}'}$ where $\vec{x}'$ is the maximal initial segment of $\vec{x}$ consisting of valid moves. Then, for all $\vec{x}$, $\LA_{\vec{x}}$ is finite, and $\LA_{\vec{x}}\subseteq\LA_{\vec{y}}$ whenever $\vec{x}\sqsubseteq\vec{y}$.
	
	For each $\vec{x}$, pick $\langle Y_{\vec{x}}\rangle\in\LF$ such that for all $Y\in\LA_{\vec{x}}$, $Y_{\vec{x}}\preceq Y$. By the strong $(p)$-property, there is a $\langle Y\rangle\in\LF$, say $Y=(y_n)\preceq X$, such that $Y/\vec{y}\preceq Y_{\vec{y}}$ for all $\vec{y}\sqsubseteq Y$.
	
	Consider the play of $G_{\LF}[X]$ wherein I follows $\sigma$ and II plays $y_0$, $y_1$, etc. This is a valid play by II by our choice of $Y$: $y_0\in \langle Y_\emptyset\rangle\subseteq\langle\sigma(\emptyset)\rangle$, $y_1\in \langle Y/(y_0)\rangle\subseteq\langle Y_{(y_0)}\rangle\subseteq\langle\sigma(y_0)\rangle$, etc. The resulting outcome is $Y$, and $\langle Y\rangle\in\LF$, a contradiction to our assumption about $\sigma$.
	
	($\Leftarrow$) Suppose that $\LF$ does not have the strong $(p)$-property, so there are $\langle X_{\vec{x}}\rangle\in\LF$ for all $\vec{x}\in{E^{[<\infty]}}$ such there for no $\langle X\rangle\in\LF$ is it the case that $X/\vec{x}\preceq X_{\vec{x}}$ for all $\vec{x}\sqsubseteq X$. Take $\langle X\rangle\in\LF$ arbitrary. We define a strategy $\sigma$ for I in $G_{\LF}[X]$ as follows: Start by playing $Y_\emptyset\preceq X,X_\emptyset$. If II plays $y_0\in\langle Y_0\rangle$, respond by playing some $Y_{(y_0)}\preceq Y_\emptyset,X_{(y_0)}$. In general, if II has played $(y_0,\ldots,y_k)$, respond by playing some $Y_{(y_0,\ldots,y_k)}\preceq Y_{(y_0,\ldots,y_{k-1})},X_{(y_0,\ldots,y_k)}$. Note that in each move, we can always find such a $\langle Y_{\vec{y}}\rangle\in\LF$ since $\LF$ is a block filter. If $Y$ is an outcome of a round of $G_{\LF}[X]$ where I followed $\sigma$, then for every $\vec{y}\sqsubseteq Y$, $Y/\vec{y}\preceq X_{\vec{y}}$. In other words, $Y$ is a diagonalization of $\langle X_{\vec{x}}\rangle_{\vec{x}\in{E^{[<\infty]}}}$, and thus by assumption, cannot be in $\LF$.
\end{proof}

Since every strategy for I in $F[X]$ is also a strategy for I in $G_\LF[X]$, it follows that if $\LF$ is a strong $(p)$-filter, $\langle X\rangle\in\LF$, and $\sigma$ a strategy for I in $F[X]$, then there is an outcome of $\sigma$ in $\LF$ (this is Theorem 4.3 in \cite{MR3864398}).

The restricted Gowers game can be used to prove a version of Theorem \ref{thm:local_Rosendal} for $(p)$-filters without the extra assumption of fullness. This result is due independently to the author and, in more generality, to No\'e de Rancourt:

\begin{thm}[Theorem 3.11.5 in \cite{Smythe_thesis} and Theorem 3.3 in \cite{MR4127870}]\label{thm:restr_local_Rosendal}
	Let $\LF$ be a $(p)$-filter on $E$. If $\A\subseteq{E^{[\infty]}}$ is analytic, then there is an $\langle X\rangle\in\LF$ such that either
	\begin{enumerate}[label=\textup{(\roman*)}]
		\item I has a strategy in $F[X]$ for playing out of $\A$, or
		\item II has a strategy in $G_\LF[X]$ for playing into $\A$.	
	\end{enumerate}	
\end{thm}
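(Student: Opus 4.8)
The plan is to localize Rosendal's proof of his block-sequence dichotomy to the filter $\LF$, running the entire combinatorial-forcing argument inside $\LF$. Two features make this possible without assuming fullness: first, since player I in $G_\LF[X]$ is only allowed to play $Y\preceq X$ with $\langle Y\rangle\in\LF$, player II never has to respond to moves from outside $\LF$, so the relevant ``accept/reject'' analysis only quantifies over $\LF$-subspaces; second, every time the construction shrinks a subspace it stays in $\LF$, and the single pseudo-intersection needed at the end is furnished by the $(p)$-property rather than by an honest infinite intersection (which is where the global proof, and Theorem~\ref{thm:local_Rosendal}, would invoke fullness). As usual I would first reduce to the case that $\A$ is closed: writing $\A$ as the projection of a closed set $\LC\subseteq{E^{[\infty]}}\times\omega^\omega$ and having II play, alongside each vector $y_k$, a natural number $n_k$ that attempts to build a branch witnessing membership in $\LC$, a strategy for II into $\LC$ projects to one into $\A$, while a position from which the auxiliary branch cannot be continued is automatically out of $\A$.

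For the core, for $\vec x\in{E^{[<\infty]}}$ and $\langle Y\rangle\in\LF$ with $\vec x$ a legal position below $Y$, say that \emph{$Y$ accepts $\vec x$} if II has a strategy in $G_\LF[Y]$, from the position in which II has already played $\vec x$, all of whose outcomes lie in $\A$, and say that \emph{$Y$ rejects $\vec x$} if no $\langle Z\rangle\in\LF$ with $Z\preceq Y$ accepts $\vec x$. Acceptance is monotone downward: if $Y$ accepts $\vec x$ and $\langle Z\rangle\in\LF$ with $Z\preceq Y$, then $Z$ accepts $\vec x$, since shrinking the subspace only removes options for I. Consequently rejection is inherited by $\preceq$-smaller $\LF$-subspaces, and for every $\vec x$ and every $\langle Y\rangle\in\LF$ some $\langle Z\rangle\in\LF$ below $Y$ \emph{decides} $\vec x$ (if nothing below $Y$ accepts, then $Y$ itself rejects). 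The heart of the argument is the \emph{propagation of rejection}: if $Y$ rejects $\vec x$, then there is $\langle Z\rangle\in\LF$, $Z\preceq Y$, such that $Z$ rejects $\vec x\concat(v)$ for every $v\in\langle Z\rangle$ with $\supp(v)$ above $\vec x$. Its proof is by contradiction: if the conclusion failed then below every $\langle Z\rangle\in\LF$ one could find a legal next vector $v\in\langle Z\rangle$ with $\vec x\concat(v)$ not rejected, hence accepted by some $\LF$-subspace, and assembling these choices against I's moves would exhibit a strategy for II witnessing that $Y$ accepts $\vec x$.

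Granting this, I would decide $\emptyset$: fix $\langle X_0\rangle\in\LF$ deciding $\emptyset$. If $X_0$ accepts $\emptyset$, then by definition II has a strategy in $G_\LF[X_0]$ all of whose outcomes lie in $\A$, which is conclusion (ii). Otherwise $X_0$ rejects $\emptyset$. Rejection below $X_0$ is a property of positions that does not depend on any single further subspace, so the collection of positions rejected below $X_0$ is determined in advance, and there are only countably many of them; enumerating them and applying the propagation lemma to each, I obtain $\LF$-subspaces $Z_{\vec x}$ witnessing propagation, and then the $(p)$-property yields a single $\langle X\rangle\in\LF$ with $X\preceq^* X_0$ and $X\preceq^* Z_{\vec x}$ for all rejected $\vec x$. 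This $X$ still rejects $\emptyset$ and has an asymptotic propagation property: for each rejected $\vec x$ there is an $n$ (a level past which $\langle X\rangle$ sits inside the relevant $\langle Z_{\vec x}\rangle$) such that $\vec x\concat(v)$ is rejected for every $v\in\langle X\rangle$ with $\min(\supp(v))>n$. I then build a strategy for I in $F[X]$: at a rejected position $\vec x$, I plays such an $n$; since any legal response $v\in\langle X\rangle$ has $\min(\supp(v))>n$, the position $\vec x\concat(v)$ is again rejected, so by induction every reachable position is rejected and the outcome is out of $\A$ (for the unfolded closed target this is exactly the failure of the auxiliary branch to continue).

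The step I expect to be the main obstacle is the propagation lemma, and specifically the reconstruction of II's accepting strategy inside it: the accepting sub-strategies for the extensions $\vec x\concat(v)$ are strategies below various subspaces $W\preceq Y$, whereas in the game witnessing that $Y$ accepts $\vec x$ player I continues to play subspaces of $Y$ rather than of $W$, so these sub-strategies must be merged coherently along a tree of I's possible moves. This is precisely the delicate bookkeeping carried out in Rosendal's proof (and abstractly in de Rancourt's Gowers-space framework), and localizing it to $\LF$ requires checking at each stage that the subspaces produced remain in $\LF$ --- which is exactly what the restriction on I in $G_\LF[X]$ guarantees. A secondary point requiring care is the reduction to closed sets and the verification that a play all of whose positions are rejected really yields an outcome outside $\A$; passing through the unfolding $\LC$ makes this link clean, since there ``out of $\A$'' becomes the purely combinatorial statement that no witnessing branch survives.
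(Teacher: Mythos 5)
The paper itself does not prove this theorem --- it quotes it from the author's thesis and from de Rancourt's paper --- so your proposal has to be measured against those arguments. Much of your skeleton is sound, and the step you single out as the main obstacle is in fact the unproblematic one: acceptance defined via $G_\LF[\cdot]$ is downward monotone among $\LF$-subspaces, rejection depends only on the position (any two $\LF$-witnesses have a common $\LF$-refinement), and in the propagation lemma the accepting sub-strategies below the various $W\preceq Y$ merge painlessly, because player I's moves in the restricted game are themselves in $\LF$ and can therefore be intersected with $W$ \emph{inside the filter}, II then answering from the intersection. That is exactly what the restricted game buys for a block filter, and together with the $(p)$-property diagonalization over the countably many rejected positions it yields a complete and correct proof of the dichotomy for \emph{open} $\A$: a rejected position $\vec x$ cannot satisfy $[\vec x]\subseteq\A$, so an outcome all of whose initial segments are rejected misses an open set.

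The genuine gap is the step you declare clean, namely passing from ``every position reached by I's strategy is rejected'' to ``the outcome is out of $\A$'' for the unfolded closed target $\LC$. Rejection of a position $(\vec x,\vec m)$ means II cannot \emph{force} the play into $\LC$ from there below any $\LF$-subspace; it does not mean the position is dead in the tree of $\LC$, i.e., that no extension of it lies in $\LC$. The outcome of I's play is just one particular extension, and it can perfectly well lie in $\A$, with some witness branch, while every position along it is rejected; the ``auxiliary branch fails to continue'' inference conflates these two notions. The quantitative repair also fails: at stage $k$ of $F[X]$, I would need to keep rejected the pairs $((y_0,\ldots,y_{k-1}),\vec m)$ for \emph{all} $\vec m\in\omega^k$, but each such pair carries its own propagation threshold, these thresholds are unbounded in $\vec m$, and I must commit to a single $n_k$; dovetailing with finite bounds per stage only excludes witness branches dominated by your bookkeeping function, whereas the closed set $\LC$ may have only fast-growing witnesses over the outcome. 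This is precisely the point at which the actual proofs (Rosendal's, localized in the thesis, and de Rancourt's abstract version) stop being a naive fusion of rejected positions and instead run a positional Gale--Stewart determinacy analysis of the unfolded games, combining strategies for I in unfolded asymptotic games below varying subspaces with strategies for II in Gowers games. As written, your argument proves the theorem only for open $\A$.
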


The following is a version of being strategic for the restricted games.

\begin{defn}
	A block filter $\LF$ on $E$ is \emph{$+$-strategic} if whenever $\alpha$ is a strategy for II in $G_{\LF}[X]$, where $\langle X\rangle\in\LF$, there is an outcome of $\alpha$ which is in $\LF$.	
\end{defn}

What is the difference between being $+$-strategic and strategic? We will see below that, at least for $(p)$-filters, it is exactly fullness.


We will need the following notion and a lemma: A \emph{tree} is a subset $T\subseteq{E^{[<\infty]}}$ which is closed under initial segments. The set $[T]$ of infinite branches through $T$ is a closed subset of ${E^{[\infty]}}$.

\begin{lemma}[cf.~Lemma 6.4 in \cite{MR2523338}]\label{lem:H_dense_tree}
	Let $\LF$ be a filter on $E$, $\langle X\rangle\in\LF$, and $\alpha$ a strategy for II in $G_{\LF}[X]$. Then, there is a tree $T\subseteq{E^{[<\infty]}}$ such that:
	\begin{enumerate}[label=\textup{(\roman*)}] 
		\item $[T]\subseteq[\alpha]$, and
		\item whenever $(y_0,\ldots,y_n)\in T$ and $\langle Y\rangle\in\LF$, there is a $y\in\langle Y\rangle$ so that $(y_0,\ldots,y_n,y)\in T$. 
	\end{enumerate}
\end{lemma}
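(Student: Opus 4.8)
The plan is to construct $T$ by recursion on the length of its nodes, attaching to each node $s=(y_0,\dots,y_n)$ that I place into $T$ an auxiliary \emph{witness} $\vec{Y}^{\,s}=(Y_0,\dots,Y_n)$: a legal sequence of moves for I (so $Y_k\preceq X$ and $\langle Y_k\rangle\in\LF$) against which $s$ is exactly the partial outcome produced by $\alpha$, i.e.\ $y_k=\alpha(Y_0,\dots,Y_k)$ for each $k\le n$. The reason for carrying the witness is that, in contrast to the set of \emph{all} partial outcomes of $\alpha$, our tree will have witnesses that cohere along branches, which is precisely what is needed for (i).

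Before the recursion I would record a reduction that makes (ii) available. Since $E$ is countable, so is $E^{[<\infty]}$, and hence $T$ will be a countable tree with no cardinality issues. Since $\LF$ is a filter and tails of its elements remain in $\LF$ (as $\langle Z/m\rangle\supseteq\langle Z\rangle\cap W$ for the finite-codimensional subspace $W=\{v:\supp(v)>m\}\cup\{0\}$, and all finite-codimensional subspaces lie in $\LF$), for \emph{any} $\langle Y\rangle\in\LF$ and any $m\in\omega$ there is a legal move $Y''$ for I with $Y''\preceq X,Y$, $\langle Y''\rangle\in\LF$, and $\min(\supp(y))>m$ for all $y\in\langle Y''\rangle$: take a common refinement $Z\preceq X,Y$ with $\langle Z\rangle\in\LF$ and set $Y''=Z/m$.

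In the recursion I would put $\emptyset\in T$ with $\vec{Y}^{\,\emptyset}=()$, and given $s=(y_0,\dots,y_n)\in T$ with witness $\vec{Y}^{\,s}$, I would do the following for each $\langle Y\rangle\in\LF$: apply the reduction with $m=\max(\supp(y_n))$ to obtain a legal $Y''\preceq X,Y$ with $\langle Y''\rangle\in\LF$ and all supports above $m$, and set $y=\alpha(\vec{Y}^{\,s}\concat Y'')$. Since $\alpha$ is a strategy for II and $Y''$ lies above $y_n$, we get $y\in\langle Y''\rangle\subseteq\langle Y\rangle$ and $\min(\supp(y))>\max(\supp(y_n))$, so $s\concat y\in E^{[<\infty]}$; I place it in $T$ with witness $\vec{Y}^{\,s\concat y}=\vec{Y}^{\,s}\concat Y''$ (fixing one such witness if $y$ is produced by several $Y$'s). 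The resulting $T$ is closed under initial segments, since every node enters as a child of a node already present. Then (ii) is immediate: given $s\in T$ and $\langle Y\rangle\in\LF$, this step produced a child $s\concat y$ with $y\in\langle Y\rangle$. For (i), let $b=(y_0,y_1,\dots)\in[T]$; by construction $\vec{Y}^{\,b\restr n}\sqsubseteq\vec{Y}^{\,b\restr(n+1)}$, so the witnesses union to a single infinite legal play $(Y_k)$ for I with $y_k=\alpha(Y_0,\dots,Y_k)$ for all $k$, whence $b$ is the outcome of the round where I plays $(Y_k)$ and II follows $\alpha$; thus $b\in[\alpha]$ and $[T]\subseteq[\alpha]$.

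I expect the main obstacle to be the coherence of witnesses needed for (i): defining $T$ as the set of all partial outcomes of $\alpha$ would give (ii) immediately but not (i), because the witnessing I-plays of the various initial segments of a single branch need not agree, and there is no \emph{a priori} König-type argument forcing them to, as I's moves range over uncountably many block subspaces. Building $T$ so that each child's witness extends its parent's by exactly one move is what enforces this coherence. The accompanying technical point is the reduction of the second paragraph, where I must meet three constraints on I's move simultaneously — refining below both $X$ and the given $Y$, remaining in $\LF$, and lying above the last played vector — which is exactly where directedness of $\LF$ and its stability under passing to tails are used.
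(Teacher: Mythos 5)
Your proof is correct and takes essentially the same approach as the paper's: the paper packages your witness assignment $s\mapsto\vec{Y}^{\,s}$ as a companion tree $S\subseteq\LF^{<\infty}$ built in lockstep with $T$, and both arguments obtain (ii) directly from the construction and (i) from the coherence of witnesses along branches. Your explicit reduction producing a legal move $Y''\preceq X,Y$ with $\langle Y''\rangle\in\LF$ is a minor extra care (the paper's verification of (ii) simply assumes $Y\preceq X$, and the block-compatibility of II's response is automatic since $\alpha$ plays legally), but it is harmless and covers the statement's quantification over all $\langle Y\rangle\in\LF$.
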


\begin{proof}
	We will define a pair of trees $T\subseteq{E^{[<\infty]}}$ and $S\subseteq\LF^{<\infty}$ as follows: Put $\emptyset\in T$ and $S$. The first level of $T$ consists of all $(y)\in{E^{[<\infty]}}$ such that $y$ is a ``first move'' by II according to $\alpha$. That is, there some $Y\preceq X$ such that $\langle Y\rangle\in\LF$ and $\alpha(Y)=y$. For each such $y$, pick a corresponding $Y$ in its preimage under $\alpha$; these comprise the first level of $S$.
	
	We continue inductively. Having put $(y_0,\ldots,y_n)\in T$ and $(Y_0,\ldots, Y_n)\in S$ with $\alpha(Y_0,\ldots,Y_i)=y_i$ for $i\leq n$, we put $(y_0,\ldots,y_n,y)$ if there is some $Y\preceq X$ such that $\langle Y\rangle\in\LF$ and $\alpha(Y_0,\ldots,Y_n,Y)=y$. Choose some $Y$ with this property and put $(Y_0,\ldots, Y_n,Y)$ into $S$. 
	
	Clearly, $[T]\subseteq[\alpha]$. To see that $[T]$ satisfies (ii), let $(y_0,\ldots, y_n)\in T$ and $\langle Y\rangle\in\LF$ with $Y\preceq X$. Let $(Y_0,\ldots,Y_n)\in S$ be such that $\alpha(Y_0,\ldots,Y_i)=y_i$ for $i\leq n$, and put $y_{n+1}=\alpha(Y_0,\ldots,Y_n,Y)\in\langle Y\rangle$. By construction, there is some $Y_{n+1}$ with $(Y_0,\ldots,Y_n,Y_{n+1})\in S$ and $y_{n+1}=\alpha(Y_0,\ldots,Y_n,Y_{n+1})$.
\end{proof}


\begin{thm}\label{thm:+_strat}
	Let $\LF$ be $(p)$-filter on $E$. Then, $\LF$ is $+$-strategic if and only if $\LF$ is strategic and full.
\end{thm}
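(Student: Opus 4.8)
The plan is to prove the two implications separately, with the restricted dichotomy (Theorem~\ref{thm:restr_local_Rosendal}) and the tree-extraction lemma (Lemma~\ref{lem:H_dense_tree}) as the two engines. The direction ``$+$-strategic $\Rightarrow$ strategic'' I would dispose of first, as it is essentially formal: if $\alpha$ is a strategy for II in $G[X]$ with $\langle X\rangle\in\LF$, then restricting $\alpha$ to those plays in which I always moves inside $\LF$ yields a strategy for II in $G_\LF[X]$. Since $G_\LF[X]$ is literally $G[X]$ with I's moves constrained, every outcome of this restricted strategy is also an outcome of $\alpha$ in $G[X]$; a $+$-strategic outcome of the restriction therefore lies in $\LF$ and witnesses strategicity.

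The substantive half of the forward direction is fullness. Let $D\subseteq E$ be $\LF$-dense; I want $\langle Z\rangle\subseteq D$ for some $\langle Z\rangle\in\LF$. I would apply Theorem~\ref{thm:restr_local_Rosendal} to the closed (hence analytic) set $\A=\{Y\in{E^{[\infty]}}:\langle Y\rangle\subseteq D\}$, obtaining $\langle X\rangle\in\LF$ satisfying either (i) I has a strategy in $F[X]$ for playing out of $\A$, or (ii) II has a strategy in $G_\LF[X]$ for playing into $\A$. The key observation is that $\LF$-density rules out (i): since $\langle X\rangle\in\LF$, there is an infinite-dimensional $V\subseteq\langle X\rangle$ with $V\subseteq D$, and in the asymptotic game $F[X]$ player I only plays natural numbers, so II can answer entirely within $V$, choosing vectors of $V$ of arbitrarily high support. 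This forces the outcome's span into $V\subseteq D$, i.e.\ into $\A$, contradicting any I-strategy for playing out of $\A$. Hence (ii) holds, and applying $+$-strategic to this II-strategy in $G_\LF[X]$ produces an outcome $Y\in\LF$; as $Y\in\A$ we get $\langle Y\rangle\subseteq D$, so $D\in\LF$.

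For the reverse direction, assume $\LF$ is strategic and full and let $\alpha$ be a strategy for II in $G_\LF[X]$ with $\langle X\rangle\in\LF$. Lemma~\ref{lem:H_dense_tree} yields a tree $T$ with $[T]\subseteq[\alpha]$ such that every node $\vec y\in T$ has an extension inside every $\langle Y\rangle\in\LF$. Thus, for each node $\vec y$, the extension set $D_{\vec y}=\{y:\vec y\concat y\in T\}$ meets every element of $\LF$, so the first alternative of the Section~3 characterization of fullness fails for $D_{\vec y}$; fullness then supplies $\langle X_{\vec y}\rangle\in\LF$ below which $D_{\vec y}$ is \emph{asymptotic}, i.e.\ meets every infinite-dimensional subspace. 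I would then use the $(p)$-property to fuse the countably many $X_{\vec y}$ into a single $\langle X^*\rangle\in\LF$ with $X^*\preceq^* X$ and $X^*\preceq^* X_{\vec y}$ for all $\vec y$ (passing to a tail so that each $D_{\vec y}$ is asymptotic below a tail of $\langle X^*\rangle$). Now I can define a strategy $\beta$ for II in the \emph{full} game $G[X^*]$ that maintains a node of $T$: when I plays any $Y\preceq X^*$, asymptoticity of $D_{\vec y}$ guarantees an extension of the current node lying in a sufficiently high tail of $\langle Y\rangle$. Every outcome of $\beta$ is a branch of $T$, hence lies in $[T]\subseteq[\alpha]$. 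Applying strategicity to $\beta$ gives an outcome $Z$ with $\langle Z\rangle\in\LF$, and since $Z\in[\alpha]$ this is exactly a $+$-strategic outcome of $\alpha$.

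The step I expect to be the main obstacle is the asymptotic upgrade in the reverse direction: Lemma~\ref{lem:H_dense_tree} only guarantees that nodes extend inside $\LF$-subspaces, whereas building a strategy in the \emph{unrestricted} game $G[X^*]$ requires extensions inside arbitrary subspaces below an element of $\LF$. Bridging this gap between ``meets every $\LF$-subspace'' and ``asymptotic below some $\LF$-subspace'' is precisely what fullness buys, and getting the supports and tails to line up so that the fused $\langle X^*\rangle$ inherits asymptoticity at every node (while keeping $\beta$'s moves legal block vectors) is the delicate bookkeeping. Dually, in the forward direction the whole argument hinges on the clean point that the bad alternative (i) is eliminated because the asymptotic game hands player I too little control to dislodge II from a fixed $D$-subspace—this is what lets $+$-strategic, applied in the restricted game, deliver an element of $\LF$ sitting inside $D$.
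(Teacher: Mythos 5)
Your proposal is correct, and while your forward direction coincides with the paper's, your reverse direction takes a genuinely different route. For the forward direction you do exactly what the paper does: restriction of a $G[X]$-strategy gives strategic, and fullness follows by applying Theorem \ref{thm:restr_local_Rosendal} to the closed set $\{Y\in{E^{[\infty]}}:\langle Y\rangle\subseteq D\}$ and killing alternative (i) by having II answer inside a subspace of $D$. For the reverse direction, the paper, after extracting the tree $T$ from Lemma \ref{lem:H_dense_tree}, applies the local Rosendal dichotomy (Theorem \ref{thm:local_Rosendal}, available since $\LF$ is $(p^+)$) to the closed set $[T]$, rules out the asymptotic-game alternative using Lemma \ref{lem:H_dense_tree}(ii) together with the fact that tails of elements of $\LF$ lie in $\LF$, and thus gets II's strategy in the unrestricted Gowers game playing into $[T]$, to which strategicity is applied. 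You bypass the dichotomy entirely: for each node $\vec{y}\in T$, Lemma \ref{lem:H_dense_tree}(ii) says the extension set $D_{\vec{y}}$ meets every element of $\LF$, so the Section 3 characterization of fullness (via asymptotic sets) yields $\langle X_{\vec{y}}\rangle\in\LF$ below which $D_{\vec{y}}$ is asymptotic; fusing the countably many $X_{\vec{y}}$'s with the $(p)$-property gives $\langle X^*\rangle\in\LF$, below which II's strategy into $[T]$ in the full game $G[X^*]$ can be built by hand, since for any I-move $Y\preceq X^*$ and sufficiently large $m$ one has $Y/m\preceq X^*/m\preceq X_{\vec{y}}$ (supports of linear combinations of a block sequence are unions of the supports of its entries), so asymptoticity of $D_{\vec{y}}$ furnishes a legal extension of the current node inside $\langle Y\rangle$; strategicity then finishes as in the paper. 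The bookkeeping you flagged as the main obstacle does go through for exactly this reason. Your route is more elementary and self-contained---it needs only the combinatorial characterization of fullness and the $(p)$-property, not the analytic machinery behind Theorem \ref{thm:local_Rosendal}---while the paper's route is shorter given that the dichotomy is already established, and follows the same ``dichotomy plus strategicity'' pattern used elsewhere in the paper.
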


\begin{proof}
	($\Rightarrow$) First observe that +-strategic implies strategic: Given any $\langle X\rangle\in\LF$ and strategy $\alpha$ for II in $G[X]$, let $\alpha'$ be the restriction of $\alpha$ to $G_{\LF}[X]$ (in the obvious sense). Since $\LF$ is +-strategic, there is an outcome of $\alpha'$, and thus of $\alpha$, in $\LF$.
	
	To see that $\LF$ is full, let $D\subseteq E$ be $\LF$-dense and put
	\[
		\D=\{Y\in{E^{[\infty]}}:\langle Y\rangle\subseteq D\},
	\]
	a closed subset of ${E^{[\infty]}}$. By Theorem \ref{thm:restr_local_Rosendal}, there is an $\langle X\rangle\in\LF$ such that either I has a strategy in $F[X]$ for playing into $\D^c$, or II has a strategy in $G_{\LF}[X]$ for playing in $\D$. However, the former is impossible: pick $Z\preceq X$ in $\D$ and let II in $F[X]$ always play elements of $\langle Z\rangle$. As $\LF$ is +-strategic, there is some outcome of II's strategy in $G_{\LF}[Y]$ in $\LF$, verifying fullness.
	
	($\Leftarrow$) Assume that $\LF$ is strategic and full, that is, $\LF$ is a strategic $(p^+)$-filter. We must prove that $\LF$ is +-strategic. Let $\langle X\rangle\in\LF$ and $\alpha$ a strategy for II in $G_{\LF}[X]$. Let $T\subseteq{E^{[<\infty]}}$ be as in Lemma \ref{lem:H_dense_tree}. By Theorem \ref{thm:local_Rosendal}, there is a $Y\preceq X$ such that $\langle Y\rangle\in\LF$ and either I has a strategy in $F[Y]$ for playing into $[T]^c$, or II has a strategy in $G[Y]$ for playing into $[T]$. The former is impossible as II has a strategy in $F[Y]$ for playing into $[T]$: Inductively apply the property in Lemma \ref{lem:H_dense_tree}(ii) to the tail block sequences played by I in $F[Y]$. Thus, II has a strategy in $G[Y]$ for playing into $[T]$. As $\LF$ is strategic, there is some outcome of this strategy, and thus some element of $[\alpha]$, in $\LF$.
\end{proof}

Theorem \ref{thm:+_strat} is a crucial part of the consistency proof of the existence of a strong $(p^+)$-filter which is not strategic in \cite{Smythe:ParaRamseyBlockI}.

\section{Summary and further questions}\label{sec:end}

Figure \ref{fig:new_imps} shows where the implications between the properties described in the Introduction stand at the end of this article. The single arrows $\rightarrow$ indicate that the converse remains open for arbitrary fields (with $\ZFC$ as a base theory). In addition to sorting out the remaining implications in this diagram, there are a few other questions we wish to highlight for further investigation (each when $|F|>2$):

\begin{ques}
	Does the Continuum Hypothesis (or Martin's Axiom) imply the existence of $(p^+)$-filters which are not strategic?
\end{ques}

\begin{ques}
	Is it consistent with $\ZFC$ that there are stable ordered-union ultrafilters on $\FIN$, but not $(p^+)$-filters on $E$?
\end{ques}

\begin{ques}
	Is there a meaningful version of the Rudin--Keisler ordering and its accompanying theory for filters on vector spaces? If so, are $(p^+)$-filters minimal?
\end{ques}

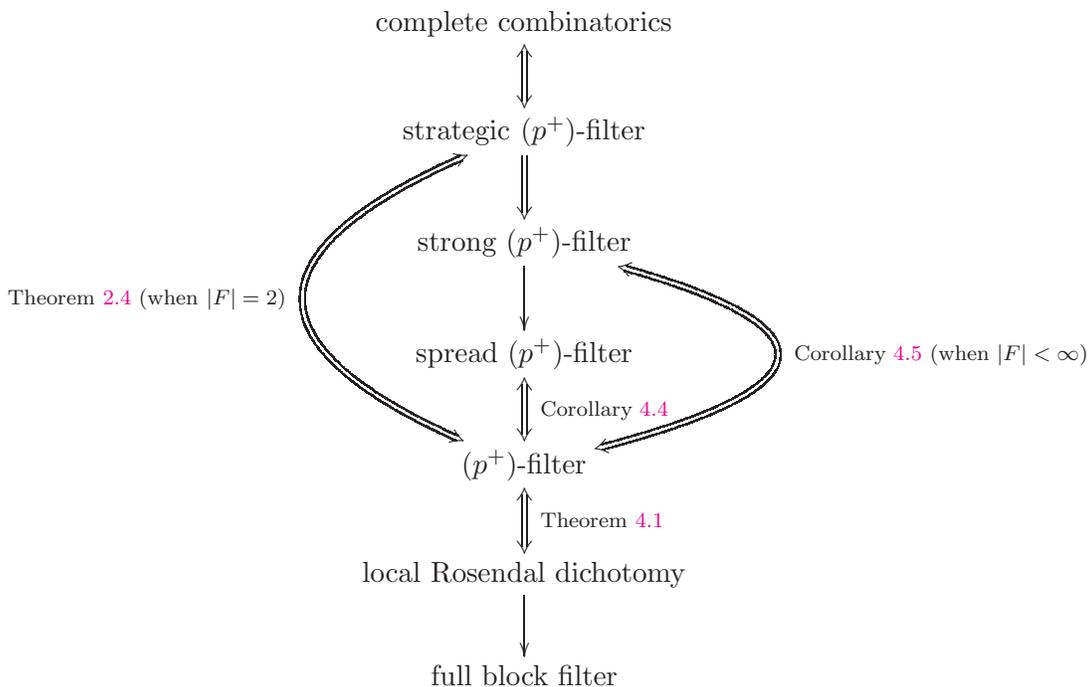
\begin{figure}
\[
	\hspace{-2em}
	\xymatrix{\text{complete combinatorics}\ar@2{<->}[d]\\
		\text{strategic $(p^+)$-filter}\ar@2[d]\\
		\text{strong $(p^+)$-filter}\ar[d]\\
		\text{spread $(p^+)$-filter}\ar@2{<->}[d]^{\text{ Corollary \ref{cor:p+_spread}}}\\
		\text{$(p^+)$-filter}\ar@2{<->}[d]^{\text{ Theorem \ref{thm:local_Rosendal_imp_p}}} \ar@2@{<->}@/^7pc/[uuu]^{\text{Theorem \ref{cor:stable_ordered_strat} (when $|F|=2$) }} \ar@2@{<->}@/_8pc/[uu]_{\text{ Corollary \ref{cor:finite_p+_strong} (when $|F|<\infty$)}}\\
		\text{local Rosendal dichotomy}\ar[d]\\
		\text{full block filter}}
\]
\caption{Updated implications from Figure \ref{fig:old_imps}.}
\label{fig:new_imps}
\end{figure}

\bibliography{../math_bib}{}
\bibliographystyle{abbrv}

\end{document}